\newtheorem{theorem}{Theorem}[section]
\newtheorem{lemma}[theorem]{Lemma}
\newtheorem{lem}[theorem]{Lemma}
\newtheorem{cor}[theorem]{Corollary}
\newcommand{\X}{\bar{X}}
\newcommand{\es}{\emptyset}
\newcommand{\sm}{\setminus}
\theoremstyle{definition}
\title{Catching a mouse on a tree}
\author{Vytautas Gruslys\footnote{Department of Pure Mathematics and Mathematical Statistics, Centre for Mathematical Sciences, Wilberforce Road, Cambridge, CB3 0WB, UK.} \and Ar\`es M\'eroueh\footnotemark[\value{footnote}] }
\newcommand{\N}{\mathbb{N}}
\newcommand{\s}{\mathcal{S}}
\begin{document}
\maketitle

\begin{abstract}
In this paper we consider a pursuit-evasion game on a graph.
A team of cats, which may choose any vertex of the graph at any turn,
tries to catch an invisible mouse, which is constrained to moving along
the vertices of the graph. Our main focus shall be on trees. We prove
that $\lceil (1/2)\log_2(n)\rceil$ cats can always catch a mouse on a
tree of order $n$ and give a collection of trees where the mouse can avoid
being caught by $ (1/4 - o(1))\log_2(n)$ cats.  
\end{abstract}

\section{Introduction}
We consider the following game played on a graph $G$. A mouse and a team of $r$ cats take turns choosing vertices of the graph. On the mouse's turn, it must move to a vertex adjacent to its current position. On the cats' turn, each cat may choose any vertex of the graph as its next position. If the mouse can evade the cats indefinitely on the graph, then $G$ is an $r$-mouse-win. Otherwise, $G$ is an $r$-cat-win. 

This game was introduced independently for one cat playing against one
mouse (i.e. $r=1$) by Haslegrave \cite{haslegrave} and by Britnell and
Wildon \cite{wildon}. They proved the following.

\begin{theorem}[Haslegrave \cite{haslegrave}, Britnell and Wildon\cite{wildon}]
\label{1catwin}
A graph $G$ is a $1$-cat-win if and only if it is a tree not containing
the tree $H$, where $H$ is the tree on ten vertices which is the
3-subdivision of a star on four vertices (in other words, $H$ is obtained
by replacing each edge of a star on four vertices by a path on four vertices).
\end{theorem}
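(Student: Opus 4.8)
The plan is to reformulate the game in terms of a \emph{contamination set}. Since the mouse is invisible and the cat receives no useful feedback (if it does not win on a given turn, the set of positions consistent with survival is determined entirely by its own past probes), the cat wins if and only if there is a \emph{fixed} finite probe sequence $c_1,c_2,\dots,c_T$ that empties this set. Concretely, writing $N(S)=\bigcup_{v\in S}N(v)$ and setting $S_0=V(G)$ and $S_t=N(S_{t-1}\setminus\{c_t\})$, one shows (via K\H{o}nig's lemma for the mouse-win direction) that the cat wins iff $S_T=\emptyset$ for some $T$; the turn order is inessential. I would also first reduce to connected $G$: distinct components evolve independently under $N$, so the cat simply clears them one at a time, and a connected acyclic graph is exactly a tree, matching the statement.

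The first necessity step is that a connected graph containing a cycle is a mouse-win. Let $C$ be a cycle in $G$. I claim $C$ stays fully contaminated forever: if $C\subseteq S_{t-1}$, then since a single probe removes only one vertex, every vertex $w\in C$ retains at least one of its two cycle-neighbours in $S_{t-1}\setminus\{c_t\}$, so $w\in N(S_{t-1}\setminus\{c_t\})$; hence $C\subseteq S_t$ by induction and $S_t\neq\emptyset$ for all $t$. Thus a $1$-cat-win connected graph must be a tree.

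For the remaining necessity step, that a tree containing $H$ is a mouse-win, I would exploit that trees are bipartite. Splitting by colour, the contamination decomposes into two independent \emph{parity tracks} (according to the colour the mouse occupies at even times); at each turn the two tracks lie in opposite colour classes, so each probe affects at most one track, and the cat wins iff it clears both. It therefore suffices to show one track cannot be cleared. Since $H=S(3,3,3)$ (centre $c$, legs $c-a_\ell-b_\ell-d_\ell$ for $\ell\in\{1,2,3\}$) sits inside the tree as an induced subtree, the mouse may confine itself to $H$, where the induced contamination dominates a genuine lower bound even if the cat devotes every probe to $V(H)$. The key idea is an invariant on the track started from all even vertices: at every even time the centre $c$ and at least two of the three $b_\ell$ remain contaminated. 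A short case analysis over the two probes in each even-to-even period confirms this, the point being that suppressing the contamination of one leg lets the centre re-infect it through the $a_\ell$, so one cat can never hold more than one leg down at a time. Hence the track never empties and any tree containing $H$ is a mouse-win; constructing and verifying this invariant is the cleverest part of the necessity argument.

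For sufficiency I would prove that every $H$-free tree is a cat-win by an explicit sweeping strategy, again one parity track at a time. The structural input comes from taking a diameter path $P$ of $T$: if an off-$P$ vertex attached at an interior spine vertex $p_j$ (with $j$ and $k-j$ both at least $3$) had depth $\ge 3$, then $p_j$ would have three internally disjoint paths of length $\ge 3$, giving a copy of $H$; so $H$-freeness forces all interior side-branches to have depth at most $2$, making $T$ caterpillar-like (a spine decorated with shallow brooms). The cat then sweeps the contamination along the spine in one direction, spending $O(1)$ probes at each spine vertex to flush its depth-$\le 2$ branch toward its leaves, and finishes with a second pass to handle the other parity track, exactly as in the two-pass clearing of a path. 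Because at most two deep branches meet at any vertex, the frontier is threatened only from the two spine directions, which a single cat can wall off in turn. I expect the main obstacle to lie here: carefully ordering the probes so that a shallow branch is cleared without the spine immediately re-contaminating it, and interleaving the two parity passes, is where the detailed (if routine) bookkeeping concentrates.
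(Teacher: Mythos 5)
You should first know that the paper never proves Theorem~\ref{1catwin}: it is imported from Haslegrave \cite{haslegrave} and Britnell--Wildon \cite{wildon} and used purely as a black box (namely, that one cat wins on trees of order at most $9$, which seeds the inductions in Section~2). So there is no in-paper proof to compare against; what follows assesses your argument on its own merits, against what the cited papers establish. Your skeleton is essentially correct and is a faithful reconstruction of the known proof. The contamination-set reformulation (cat wins iff some fixed probe sequence empties $S_t=N(S_{t-1}\setminus\{c_t\})$, with K\H{o}nig's lemma for the mouse direction) is right, and so is the cycle argument. The parity-track decomposition is legitimate because the neighbourhood operator distributes over unions: the process started from $V_1\cup V_2$ is exactly the union of the two class processes, and any single probe can delete a vertex from only the track currently occupying its colour class. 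I checked your invariant on $H=S(3,3,3)$ --- at even times the centre $c$ and at least two of the $b_\ell$ remain contaminated --- against the worst probe pairs (probe $c$ then some $a_i$ or $d_i$; probe some $b_i$ then a vertex of another leg), and it is indeed preserved, so your necessity direction is complete. The diameter-path structure claim is also correct: a depth-$3$ branch at $p_j$ with $3\le j\le k-3$ yields $H$, and branches near the ends have depth at most $\min(j,k-j)\le 2$ by maximality of the path. This parity idea is, incidentally, the same trick the paper itself uses (its Lemma~\ref{equivalence} and the guard in Theorem~\ref{upperbound}), though for a different theorem.

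The shortfall is in the sufficiency sweep, which you explicitly leave as bookkeeping, and one statement there is false as written: a spine vertex $p_j$ cannot in general be flushed with $O(1)$ probes, because each depth-$2$ leg $x$ together with its pendant leaves forms its own self-sustaining contamination cycle ($x\leftrightarrow$ its leaves) that no probe outside that leg can break; the true cost is $O(1)$ probes per leg, i.e.\ proportional to the broom, which is harmless since only finiteness matters. More substantively, the missing probe order and invariant are the actual content of this direction. They can be supplied: process $p_j$ by the sequence $p_j,x_1,p_j,x_2,\dots,p_j$, probing $p_j$ at every step at which the current track could occupy $p_j$ and dipping into one leg in between; the repeated probes at $p_j$ are applied to the set before it spreads, so contamination never leaks back into the cleaned region or into already-cleaned legs, and at the end of the dance the track is contained strictly beyond $p_j$ in the appropriate class, which is the invariant that drives the induction along the spine; a parity-aligned second pass (exactly as in Lemma~\ref{equivalence}) then clears the other track. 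So: right route throughout, necessity fully proved, but sufficiency is a correct plan rather than a proof until this dance and its invariant are written down.
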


Very recently Abramoskaya, Fomin, Golovach and Pilipczuk \cite{hunterrabbit}
extended this problem to an arbitrary number of cats. They defined $h(G)$ to
be the minimal integer such that $G$ is an $h(G)$-cat-win and proved the following two theorems.  

\begin{theorem}[Abramoskaya, Fomin, Golovach and Philipczuk \cite{hunterrabbit}]
\label{grid}
Let $G$ be an $n\times m$ grid. Then
$h(G) = \left\lfloor\frac{\min(n,m)}{2}\right\rfloor+1$. 
\end{theorem}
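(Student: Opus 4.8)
The plan is to track, after each move, the set $S$ of vertices where the invisible mouse could currently be, given the history of cat placements. The cats win precisely when they can force $S=\emptyset$: on a cat move $S$ shrinks to $S\setminus C$, where $C$ is the set of occupied vertices, and on the subsequent mouse move $S$ expands to the neighbourhood $N(S\setminus C)$. Assume without loss of generality that $n\le m$, so the target value is $\lfloor n/2\rfloor+1$. The single most important observation is that the grid is bipartite: the mouse changes colour at every move, so at any fixed time every vertex of $S$ lies in one colour class. Consequently the frontier the cats must control at any instant is only the monochromatic part of a column, of size at most $\lceil n/2\rceil$, and since $\lceil n/2\rceil\le\lfloor n/2\rfloor+1$ with equality exactly when $n$ is odd, this is what pins down the constant.

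For the upper bound I would exhibit a left-to-right sweeping strategy for $\lfloor n/2\rfloor+1$ cats. Index the columns $1,\dots,m$ and maintain the invariant that, at the appropriate phase, all columns of index at most $j$ are clean (contain no vertex of $S$). The only contaminated cells adjacent to the cleaned region then lie in column $j+1$, and by the parity observation only the at most $\lceil n/2\rceil$ cells of the current mouse-colour there can belong to $S$; placing cats on exactly those cells cleans column $j+1$. One then advances the boundary, using the spare cat (available when $n$ is even) as a guard against reinfection of column $j$, and, when $n$ is odd, exploiting the alternation of the active colour from one column to the next so that a guard is not simultaneously needed. Verifying that this interleaving really maintains the invariant against every mouse move, in particular that no already-cleaned column is reinfected from the right, is the routine-but-delicate core of this direction.

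For the lower bound I would give an evasion argument against $r=\lfloor n/2\rfloor$ cats by producing an invariant on $S$ that can never be driven to $\emptyset$. The natural candidate is that $S$ always contains the entire monochromatic class of some \emph{live} column: a full colour-column has up to $\lceil n/2\rceil>\lfloor n/2\rfloor$ cells, so $r$ cats can never cover it in a single move, and after the mouse's spreading step the operator $N(\cdot)$ refills that column and seeds its neighbours faster than the cats can erode it. I would argue move-by-move that the cats cannot simultaneously finish cleaning the current live column and block the contamination from re-entering it, since doing so would require strictly more than $\lfloor n/2\rfloor$ occupied cells; hence some live column always survives and $S\neq\emptyset$ forever.

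I expect the main obstacle to be the exact bookkeeping that makes both bounds meet at $\lfloor n/2\rfloor+1$, namely the odd-$n$ case, where there is no spare guard cat in the sweep and the lower-bound invariant must be squeezed to its tightest form. The parity reduction is what converts the naive width-$n$ frontier into a width-$\lceil n/2\rceil$ one, and getting the $+1$ and the floor/ceiling interplay correct, rather than merely the qualitative $\Theta(n)$ behaviour, is where the real care is required.
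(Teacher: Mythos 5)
You should first note that the paper does not prove Theorem~\ref{grid} at all: it is quoted as background from \cite{hunterrabbit}, so your proposal must stand entirely on its own --- and its lower bound contains a concretely false step. Your key counting claim is that a monochromatic column class has ``up to $\lceil n/2\rceil>\lfloor n/2\rfloor$ cells, so $r=\lfloor n/2\rfloor$ cats can never cover it in a single move.'' When $n$ is even this is wrong: both colour classes of a column have exactly $n/2=\lfloor n/2\rfloor$ cells, so the cats \emph{can} cover either one in a single move. Even for odd $n$, the class that is live in a fixed column alternates in size between $\lceil n/2\rceil$ and $\lfloor n/2\rfloor$ as the mouse's parity flips, so it is coverable every other turn. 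Worse, the invariant does not self-repair: if the cats shoot all but one vertex of the live class, the surviving contamination is a single cell, and its neighbourhood (at most four cells) contains no full column class, so the spreading step does not ``refill'' the column as you assert. Designing an invariant that survives a concentrated attack on the live column is precisely the hard part of this theorem --- it is the bulk of the proof in \cite{hunterrabbit} --- and your sketch has no mechanism for it.

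The upper bound has the symmetric gap, which you set aside as ``routine-but-delicate'' bookkeeping: re-contamination. After the cats shoot the at most $\lceil n/2\rceil$ live cells of column $j+1$, the mouse's move re-infects column $j+1$ from the untouched contaminated cells of column $j+2$ (now in the opposite colour class), and one move later column $j$ would be re-infected. To maintain ``columns $\le j+1$ clean'' the cats must re-shoot up to $\lceil n/2\rceil$ cells of column $j+1$ on the very next turn, leaving at most one cat to make progress on column $j+2$; the sweep therefore stalls, and a single spare ``guard'' cannot protect a frontier of $\lceil n/2\rceil$ cells (unlike in trees, a grid has no cut vertex for a guard to sit on). What you have right is the standard framework --- tracking the set of possible mouse positions and the bipartite parity reduction --- but that is the easy part; both the matching strategy and the matching evasion argument are missing their essential ideas, and in \cite{hunterrabbit} each requires a considerably more intricate construction than a speed-one column sweep or a single-column invariant.
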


\begin{theorem}[Abramoskaya, Fomin, Golovach and Philipczuk \cite{hunterrabbit}]
\label{trees}
There exists a constant $c_1$ such that for any tree $T$ of order $n$,
$h(T)\leq c_1\log(n)$. On the other hand, there is a constant $c_2$ such
that for any $n$, there exists a tree $T$ of order $n$ with
$h(T)\geq c_2\log(n)/\log(\log(n))$.
\end{theorem}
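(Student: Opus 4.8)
The statement has two independent halves, which I would attack with different tools: a centroid-based recursion for the upper bound, and an explicit recursive family together with a recontamination argument for the lower bound. Since the mouse is invisible, I would run the whole analysis through the \emph{contamination set} $S$: the set of all vertices at which the mouse could currently be located, consistent with the play so far. Initially $S = V(T)$; when the cats shoot a set $C$, $S$ becomes $S \setminus C$; and when the mouse moves, $S$ becomes $N(S) := \bigcup_{v \in S} N(v)$. The team wins precisely when it can force $S = \emptyset$. The key observation is a \emph{confinement lemma}: if one cat shoots a fixed vertex $v$ on every turn, then contamination can never cross $v$, since immediately after each shot $v \notin S$, so at the next move no vertex on one side of $v$ can receive contamination from the other side through $v$; hence the components of $T-v$ evolve independently and, once cleared, stay clear. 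Taking $v$ to be a centroid, every component of $T-v$ has at most $\lfloor n/2\rfloor$ vertices, so dedicating one cat to $v$ and clearing the components one at a time recursively gives $f(n) \le 1 + f(\lfloor n/2\rfloor)$ for the number $f(n)$ of cats sufficient on any tree of order at most $n$. This yields $f(n) = O(\log n)$, the first half with $c_1 = O(1)$.

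\textbf{Lower bound: construction.} For the second half I would build a family $T_k$ with $h(T_k)\ge k$ by induction, starting from the tree $H$ of Theorem~\ref{1catwin} (so $h(H)\ge 2$). To form $T_k$, take a central vertex $c$ and attach $m_k$ pendant copies of $T_{k-1}$, each joined to $c$ by a subdivided path of length $\ell_k$. The crucial design choice is that the branching must \emph{exceed} the cat budget, i.e.\ $m_k = \Theta(k)$, so that with only $k-1$ cats and $m_k > k-1$ branches at least one branch contains no cat at every moment. This forces $n_k \le m_k(\ell_k + n_{k-1})$ with $m_k = \Theta(k)$, whence $\log n_k = \sum_{j \le k} O(\log j) = O(k\log k)$; inverting, a tree of this family on $n$ vertices has $h(T) = \Omega(\log n/\log\log n)$. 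It is precisely the growing branching that costs the extra $\log\log n$ factor and rules out a clean $\log n$ bound on this side.

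\textbf{Lower bound: evasion argument.} To prove $h(T_k)\ge k$ I would exhibit an evasion strategy against $k-1$ cats that maintains the invariant that \emph{some copy of $T_{k-1}$ is fully contaminated together with its connecting path up to $c$}, which keeps $S \ne \emptyset$ forever. The inductive hypothesis supplies that clearing a single copy of $T_{k-1}$ requires all $k-1$ cats. Hence to clear the copy carrying the invariant the cats must first seal it off from $c$ (otherwise contamination flows back down the connecting path and re-dirties it), which costs a cat and leaves only $k-2$ for the interior of a $T_{k-1}$, impossible by induction. Meanwhile, since $m_k > k-1$, the long connecting paths guarantee that before the cats can finish, contamination has already refilled another branch, to which the adversary transfers the invariant.

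\textbf{Main obstacle.} I expect the heart of the difficulty to be the previous paragraph: turning the informal slogan ``clearing needs $k-1$ cats, sealing costs a cat, and the paths leak'' into an argument valid against \emph{arbitrary} cat strategies, not merely the natural ``clear one branch at a time'' strategy. This should require a genuine potential/invariant argument — for instance tracking, for each branch, how deep a clean barrier the cats currently hold along its connecting path, and showing that the total barrier budget cannot keep all $m_k$ branches simultaneously sealed while also freeing the $k-1$ cats needed to finish any one interior. Choosing $\ell_k$ large enough to control the recontamination timing, and strengthening the ``fully contaminated copy'' invariant so that it survives the branch-to-branch transfers, is where the real care will be needed.
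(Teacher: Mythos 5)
First, note that this theorem is quoted from \cite{hunterrabbit}; the present paper does not reprove it, but it proves strengthened versions of both halves, so your proposal can be compared against that machinery. Your upper bound is correct and essentially identical to the paper's Lemma \ref{basicupperbound}: post one cat forever at a centre (Lemma \ref{centre}), observe that contamination can never cross a permanently occupied vertex, and recurse on the components, each of order at most $\lceil (n-1)/2\rceil$, giving $h(T)\le\lceil\log_2 n\rceil$.

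The lower bound half, however, has a genuine gap, and you identify it yourself without closing it. The step ``to clear the copy carrying the invariant the cats must first seal it off, which costs a cat and leaves only $k-2$ for the interior, impossible by induction'' is not a legitimate use of the induction hypothesis. What induction gives you is that $k-2$ cats cannot empty the contamination set of the \emph{standalone} game on $T_{k-1}$ started from full contamination. Inside $T_k$ the cats are under no obligation to seal and then clear with $k-2$ cats: they may pour all $k-1$ cats into the invariant copy (which they \emph{can} then clear), forcing you to transfer the invariant to another branch --- but that branch need not be fully contaminated at that moment, since the cats may have partially cleared it earlier, and recontamination along a connecting path of length $\ell_k$ takes time. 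So the entire burden of the proof is the quantitative bookkeeping of how fast branches recontaminate versus how fast $k-1$ cats can cycle through the $m_k$ branches; this is exactly the ``potential/invariant argument'' you defer in your final paragraph, and without it there is no proof. It is worth contrasting this with how the present paper obtains its stronger bound of $(1/4-o(1))\log n$: instead of a recursive spider with growing branching, it takes $T_k$ to be the $1$-subdivision of the complete binary tree and runs a global counting argument --- the invariant is that the contamination set always contains at least $n$ important vertices, where $n$ is chosen so that $\gamma(n)$ (the number of \texttt{10} patterns in its binary expansion) is large, and Lemma \ref{lem:weakboundary}, an isoperimetric-type inequality relating $|\partial X|$ to $\gamma(|X|)$ via the function $\beta$, shows that too few cats can never destroy this invariant in a single round. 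That route avoids recontamination-timing arguments entirely, and it is precisely what removes the $\log\log n$ loss that your growing-branching construction necessarily incurs.
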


All the logarithms in this paper are in base 2 unless otherwise stated.
Our main results are the following.

\begin{restatable}{theorem}{upperboundrestate} \label{upperbound}
Let $T$ be a tree of order $n$. Then $h(T)\leq \lceil(1/2)\log(n)\rceil$.  
\end{restatable}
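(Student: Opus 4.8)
The plan is to exhibit an explicit strategy for $k := \lceil (1/2)\log n\rceil = \lceil \log_4 n\rceil$ cats, organised around two ideas: a parity invariant forced by the mouse's obligation to move, and a monotone centroid decomposition of the tree. Root $T$ at an arbitrary vertex and let $\mathrm{depth}(x)$ be the distance from the root. On each of its turns the mouse changes its depth by exactly $\pm 1$, so the quantity $(\text{turn number}) + \mathrm{depth}(\text{mouse}) \bmod 2$ never changes; call its value the \emph{phase} $\pi\in\{0,1\}$. The cats do not know $\pi$, but there are only two possibilities, so it suffices to design, for each fixed value of $\pi$, a strategy catching in finitely many turns every mouse of that phase: the cats run the $\pi=0$ strategy to completion and, if no capture has occurred, they know the mouse has phase $1$ and switch. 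Hence I may assume the cats know the parity of $\mathrm{depth}(\text{mouse})$ on every turn.

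The phase invariant powers a factor-two saving in the following guarding lemma, which I would isolate first. Say that a cat \emph{blocks} a vertex $v$ if it guarantees the mouse is never located at $v$; since $v$ separates the tree, a blocked vertex prevents the mouse from moving between the components of $T-v$. A cat can block $v$ by sitting on it forever, but a phase-$\pi$ mouse can be at $v$ only on the turns $t$ with $t \equiv \pi + \mathrm{depth}(v)\pmod 2$, so it is enough for the cat to occupy $v$ on exactly those turns. The key consequence is that a single cat can block \emph{two} vertices $u,v$ at once, provided $\mathrm{depth}(u)\not\equiv\mathrm{depth}(v)\pmod 2$ (equivalently $\mathrm{dist}(u,v)$ is odd): the turns on which it must sit on $u$ and on $v$ are complementary, so it simply alternates between them.

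With these tools the clearing is by recursion on a centroid decomposition. To clear a tree $S$ one chooses a separator whose removal splits $S$ into pieces of size at most $|S|/2$, blocks it, and clears the pieces one at a time; because the separator is blocked throughout, a cleared piece cannot be recontaminated (any re-entry would have to cross the blocked separator), so the search is monotone and the confinement of the invisible mouse is preserved. Used naively this costs one cat per level of the decomposition, i.e.\ $\log n$ cats. To reach $\lceil(1/2)\log n\rceil$ I would descend two levels at a time: block a separator and then, inside each oversized piece, block a second separator, reducing the relevant size from $m$ to at most $m/4$, and \emph{assign both separators to a single cat} by the guarding lemma. The separators simultaneously in force at any moment form exactly the nested chain along the current root-to-leaf branch of the recursion, so a cat responsible for two of them is always free to alternate as required.

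The main obstacle is precisely the parity bookkeeping that makes this pairing legitimate: the guarding lemma pairs two vertices only when they lie at odd distance, whereas two centroids produced by consecutive halvings can perfectly well lie at even distance, and stepping off a centroid to correct its parity destroys the size guarantee. The heart of the argument is therefore to choose the separators so that, along every branch of the recursion, they can be matched into opposite-parity (odd-distance) pairs while still cutting the sizes down by a factor of four over each pair of levels; here the obligation to move helps again, since highly branching local configurations such as stars are blocked outright by one cat and need not be recursed into at all. Once the separators are arranged to alternate in parity, pairing consecutive ones shows that $\lceil (1/2)\log n\rceil$ cats suffice, and combining the two phase-strategies completes the proof.
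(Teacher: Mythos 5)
Your plan follows the paper's proof almost step for step: your phase reduction is the paper's $V_1$-variant game (Lemma \ref{equivalence}), your blocking lemma is the paper's observation that one cat can guard two vertices in opposite vertex classes by alternating between them, and your two-levels-at-a-time centroid recursion with a factor-four size reduction per cat is exactly the paper's induction (centre $v$, then centres $b_i$ of the at most three components of $T-v$ of order greater than $n/4$). But the write-up stops precisely where the real work begins. You correctly isolate the obstacle --- consecutive separators may lie at even distance, and perturbing a centroid to fix its parity destroys the balance guarantee --- and then merely assert that the separators can be \emph{chosen} to alternate in parity, supported only by a vague remark that highly branching configurations such as stars are blocked outright. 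That assertion is never proved, and it is not how the difficulty is actually resolved: nothing prevents a tree in which $v$ and the centres $b_1, b_2, b_3$ of all three large components lie in the same vertex class, and neither you nor the paper shows that re-choosing balanced separators can avoid this situation.

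What the paper does instead (its Case 4) is to keep the badly placed centres $b_i$ and guard \emph{neighbours} $b_i'$ of them, which automatically lie in the opposite class, and then re-order the clearing into eight stages so that at every moment the single guard either sits permanently on one vertex (which blocks it for both parities) or alternates between two opposite-class vertices such as $v$ and $b_1'$, or $b_3'$ and $b_2$. Guarding a neighbour of a centre blocks a different cut of the tree than guarding the centre itself, so the pieces $B_i^-$, $B_i^*$, $U$ must be cleared in a carefully chosen order, with two-turn transition stages in between, and one must verify stage by stage that the mouse's possible positions keep shrinking and no cleared region is recontaminated. This case analysis and bookkeeping constitute the bulk of the paper's proof, and it is exactly the piece your proposal defers as ``the heart of the argument.'' As it stands, your submission is a correct high-level plan --- identical in architecture to the paper's --- with its central lemma missing, so it does not yet constitute a proof.
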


\begin{restatable}{theorem}{lowerboundrestate} \label{lowerbound}
For any $\epsilon > 0$ and any sufficiently large $n$ there is a tree
$T$ of order $n$ such that $h(T) \ge (1/4 - \epsilon) \log(n)$.
\end{restatable}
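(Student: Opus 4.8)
The plan is to build a single recursively defined family of trees $(T_k)_{k\ge 0}$ and to prove, by induction on the recursion depth $k$, that the mouse can evade $k$ cats, i.e. $h(T_k)\ge k+1$. The $k$-th tree will have order $n\approx 16^{k}$, so that $h(T_k)\ge k+1\ge (1/4-\epsilon)\log n$, and a routine padding argument — attaching a pendant path, which cannot decrease $h$ since the mouse may simply ignore it — then upgrades this to a tree of \emph{every} sufficiently large order $n$.

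\textbf{The construction.} Fix a branching number $m$ and an even corridor length $\ell$, both to be tuned. Let $T_0$ be a single vertex, and let $T_k$ consist of a fresh root $\rho_k$ joined by $m$ internally disjoint paths of length $\ell$ to the roots of $m$ disjoint copies of $T_{k-1}$. Then $|V(T_k)| = 1 + m(\ell + |V(T_{k-1})|)$, which grows geometrically; I would take $m$ and $\ell$ as small as the evasion argument below permits, and the point is that the minimal admissible choice makes the growth ratio about $16$, which is precisely what produces the constant $1/4$.

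\textbf{The evasion strategy.} The inductive hypothesis is the statement for $T_{k-1}$, phrased as an invariant on the contamination set $S$ (the set of vertices the invisible mouse could currently occupy, after the cats have moved). The mouse aims to maintain the invariant that at least one of the $m$ copies of $T_{k-1}$ is still "fully alive", i.e. its local contamination is in the state guaranteed by the inductive hypothesis. The heuristic behind this is a simple counting argument: with only $k$ cats present, at any single turn at most one copy can contain as many as $k$ cats, so every other copy currently holds at most $k-1$ cats and is, by induction, a subtree on which the local contamination cannot be extinguished; hence $m-1\ge 1$ copies stay contaminated and $S$ is never empty. The mouse plays the inductive strategy inside whichever copy it is living in, and uses the even-length corridors both to stall at the correct parity (so that it can always move) and to reflood any copy the cats have just vacated.

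\textbf{The main obstacle.} The counting above is only a heuristic, and making it rigorous is the entire content of the proof, because the game is \emph{not} monotone: the cats may clear a copy, abandon it, and later clear it again, and since a cat may teleport to any vertex they can try to shuttle their $k$ cats back and forth through the root so as never to need $k+1$ simultaneous clean "frontiers". A copy that currently holds few cats is therefore \emph{not} automatically alive — it may have been cleared a moment earlier — so the hard part is to show that after the cats leave a copy the mouse can always reflood it and restore the inductive invariant before any such shuttling schedule can profit. This is exactly what forces $m$ and $\ell$ to be taken large enough to defeat every shuttling schedule, and it is this enforced blow-up — from the ideal factor of $4$ in $|V(T_k)|$ per extra cat (which would match Theorem~\ref{upperbound}) up to a factor of $16$ — that costs a factor of two in the exponent and yields the constant $1/4$ rather than $1/2$. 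Formalising the reflooding step as an invariant robust against arbitrary non-monotone cat play, together with the parity bookkeeping that lets the mouse wait inside a copy using the even corridors, is where essentially all of the work lies.
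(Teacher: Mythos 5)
Your proposal is not a proof: it is an outline whose central step --- the one you yourself identify as ``where essentially all of the work lies'' --- is never carried out. The heuristic you give (at any turn at most one copy of $T_{k-1}$ can hold $k$ cats, so some other copy stays contaminated by induction) fails for exactly the reason you concede: the inductive hypothesis concerns a game on a fresh copy of $T_{k-1}$ starting from its fully contaminated state, whereas during play a copy that currently holds few cats may already have had its contamination extinguished on an earlier visit. To restore the invariant you must show that contamination can flow back through a corridor and refill a cleared copy faster than any shuttling schedule of teleporting cats can re-clear it; you give no invariant, no timing analysis, and no argument that any fixed choice of $m$ and $\ell$ defeats all such schedules. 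The quantitative core is likewise missing: the claim that the minimal admissible $m$ and $\ell$ force a growth ratio of about $16$ per level (whence the constant $1/4$) is asserted, not derived --- nothing in the proposal connects the number $16$ to any estimate. As written, the proposal reduces the theorem to an unproved re-flooding lemma that is at least as hard as the theorem itself, and the non-monotonicity of the game (which you correctly flag) is precisely what makes such recursive evasion arguments treacherous.

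For contrast, the paper sidesteps recursive strategy-building entirely. It takes $T_k$ to be the $1$-subdivision of the complete binary tree $B_k$ and runs a potential argument directly on the set of possible mouse positions: it introduces $\gamma(n)$ (the number of places in the binary expansion of $n$ where a one is followed by a zero) and $\beta(n)$ (the least number of terms expressing $n$ as a signed sum of powers of two), proves $\beta(n) \ge \gamma(n)$, and deduces an isoperimetric-type inequality: any set of $n$ important vertices in $T_k$ has boundary of size at least $\gamma(n) - \epsilon k$. Taking $n$ with binary expansion $1010\dotsc10$, so that $\gamma(n) = \lfloor k/2 \rfloor$, the contamination set always retains at least $n$ important vertices when at most $(1/4 - \epsilon)k$ cats play, because each double round of cat moves deletes only about $2c$ vertices while the large boundary replenishes the count. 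The non-monotone cat behaviour that derails your outline is absorbed automatically by this potential function; no re-flooding lemma is ever needed. If you wish to salvage your construction you would have to formulate and prove such a lemma, and I see no easy route to it.
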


Thus, if we let 
$$g(n) = \max\{h(T): T \text{ is a tree of order } n \} $$
then the growth rate of $g$ is determined up to a constant. Namely,
we prove that $g(n) = \Theta(\log(n))$. It would be interesting to determine
$g(n)$ asymptotically.

The organization of this paper is as follows. In Section 1 we first
prove a basic upper bound of $\lceil \log(n) \rceil$ on $g(n)$ and then
we go on to prove Theorem \ref{upperbound}. In Section 2 we first prove
that $g(n) = \Omega(\log(n))$ thereby substantially improving the lower
bound of \cite{hunterrabbit}, and then we go on to prove
Theorem \ref{lowerbound}.  
 
\section{Upper bound}

\subsection{Basic upper bound}

\begin{lemma}
\label{centre}
Let $T$ be a tree. Then there exists $v\in V(T)$ such that $T-v$ is a forest each component of which has order no more than $\lceil (n-1)/2\rceil $. Such a vertex is called a \emph{centre} of $T$. 
\end{lemma}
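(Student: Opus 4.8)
The plan is to exhibit the centre via an extremal argument on a natural weight function, which amounts to locating a centroid of the tree. For each vertex $v \in V(T)$, define $w(v)$ to be the maximum order among the components of $T - v$. I would then choose $v$ to minimise $w(v)$ over all vertices and claim that any such minimiser is a centre, i.e. that $w(v) \le \lceil (n-1)/2 \rceil$.

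The first step is to record a uniqueness fact. If $w(v) > \lceil (n-1)/2\rceil$ for some vertex $v$, then the components of $T - v$ partition the $n-1$ vertices of $V(T) \setminus \{v\}$, so at most one of them can have order exceeding $(n-1)/2$. Hence there is a \emph{unique} component $C$ with $|C| = w(v) > \lceil (n-1)/2\rceil$, and I let $u$ denote the neighbour of $v$ lying in $C$.

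The heart of the argument is to show that walking from $v$ into $C$ strictly decreases the weight, i.e. $w(u) < w(v)$, contradicting the minimality of $v$. To see this I would analyse the components of $T - u$. Exactly one of them contains $v$, and it consists of $v$ together with all the components of $T - v$ other than $C$; it therefore has order $n - |C|$. Every other component of $T - u$ is contained in $C \setminus \{u\}$, and so has order at most $|C| - 1 < |C|$. A short case check (separating $n$ even and odd and using $|C| > \lceil (n-1)/2\rceil$) shows that $n - |C| < |C|$ as well. Thus every component of $T - u$ has order strictly less than $|C| = w(v)$, giving $w(u) < w(v)$.

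Combining these observations, a minimiser $v$ of $w$ cannot satisfy $w(v) > \lceil (n-1)/2\rceil$, since otherwise the neighbour $u$ constructed above would have strictly smaller weight, contradicting minimality. Hence $w(v) \le \lceil (n-1)/2\rceil$, so $v$ is a centre. I expect the only mildly delicate point to be the final inequality $n - |C| < |C|$, where one must handle the ceiling carefully for each parity of $n$; everything else follows directly from the tree structure. An equivalent phrasing would replace the extremal choice by a \emph{walking} argument: start at any vertex and repeatedly move into the (unique) oversized component, using the strict decrease of $w$ to guarantee that the process terminates, necessarily at a centre.
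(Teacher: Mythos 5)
Your proof is correct, but it takes a genuinely different route from the paper. The paper proceeds by induction on $n$: it removes a leaf $w$, applies the induction hypothesis to $T-w$ to obtain a candidate vertex $v$, and then performs a case analysis (parity of $n$, whether $w$ attaches to $v$ or to a component $T_1$, and whether $T_1$ is already at the maximum allowed size, in which case the centre must be shifted to the neighbour $y$ of $v$ inside $T_1$). Your argument is the classical centroid (descent) argument: minimise $w(v) = \max\{|C| : C \text{ a component of } T-v\}$, and show that if $w(v) > \lceil (n-1)/2\rceil$ then the neighbour $u$ of $v$ inside the unique oversized component $C$ satisfies $w(u) < w(v)$. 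Your key inequality $n - |C| < |C|$ does hold in both parities, since $|C| > \lceil (n-1)/2 \rceil$ forces $|C| > n/2$ whether $n$ is even or odd, and the uniqueness of the oversized component follows because two components of order exceeding $(n-1)/2$ would overfill the $n-1$ vertices of $T - v$. What your approach buys is the avoidance of the paper's somewhat fiddly case analysis and an explicit algorithmic interpretation (walk toward the large component until it shrinks below the threshold); what the paper's induction buys is uniformity with the inductive style of the rest of Section 2, where the same leaf-removal template recurs in the strategy-building arguments. Either proof is complete and correct.
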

\begin{proof}
The proof is by induction on $n = |V(T)|$. For $n=1,2$ the claim is clear. 

Suppose $n\geq 3$ and $n$ is even, i.e. $n=2k$ for some $n\in \N$. $T$ contains a leaf $w$. By induction there exists $v\in V(T-w)$ such that $T-w-v$ is a forest with components $T_1$, $T_2$,\ldots, $T_l$ such that each one of them has order no more than $k-1$. If $w$ is joined to $v$ then $\{w \},T_1,T_2,\ldots, T_l$ are the components of $T-v$ and since $ 1 \leq \lceil (n-1)/2\rceil$ we are done. Otherwise $w$ is joined to a vertex of one of the $T_i$'s, without loss of generality $T_1$. Then $T_1\cup \{w\}, T_2,\ldots, T_l$ are the components of $T-v$ and since $k = \lceil (n-1)/2\rceil$ we are done. 

Now suppose $n\geq 3$ is odd, i.e. $n= 2k+1$ for some $k\in \N$. As before we let $w$ be a leaf of $T$ and let $v$ be a vertex such that $T-w-v$ is a forest with components $T_1$, $T_2$,\ldots, $T_l$ of size no more than $k$ each. Again if $w$ is joined to $v$ we are done. Otherwise we may assume without loss of generality that it is joined to a vertex of $T_1$. If $|T_1|\leq k-1$ then $T_1\cup \{w\}, T_2,\ldots, T_l$ are the components of $T-v$ and since $k = \lceil (n-1)/2\rceil$ we are done. So assume that $|T_1| = k$. Then notice that $\sum_{i\geq 2}|T_i| =k-1$. Now $v$ is joined to a unique element of $T_1$, call it $y$. Then $T-y$ is made of two parts $T_1 - y$ and $\left(\cup_{i\geq 2}T_i\right)\cup\{ v\}$ with no edge between them and having order no more than $k$ each, hence the result.
\end{proof}

\begin{lemma}
\label{basicupperbound}
Let $T$ be a tree of order $n$. Then $\lceil\log_2(n)\rceil$ cats have a winning strategy on $T$. 
\end{lemma}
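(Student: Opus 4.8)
The plan is to argue by induction on $n$, using Lemma \ref{centre} together with a single ``guarding'' cat that disconnects the tree. It is convenient to track the game through the set of vertices the mouse could currently occupy: write $A\subseteq V(T)$ for this \emph{contaminated set}, initialised to $A=V(T)$. When the cats occupy a set $C$ the contaminated set becomes $A\setminus C$ (and the mouse is caught if this is empty), and after the mouse moves it becomes the open neighbourhood of $A\setminus C$. The cats win precisely when they can force $A=\emptyset$, and since enlarging the initial contaminated set only helps the mouse, it suffices to exhibit, for every tree $T$ of order $n$, a strategy for $\lceil\log_2 n\rceil$ cats that reduces $A=V(T)$ to $\emptyset$.

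For the inductive step, I would fix a centre $v$ of $T$ as provided by Lemma \ref{centre}, and let $T_1,\dots,T_l$ be the components of $T-v$, each of order at most $\lceil(n-1)/2\rceil$. One cat is dedicated to sitting on $v$ for the entire remainder of the game; call it the guard. While the guard occupies $v$, the contaminated set never contains $v$, and since every edge of $T$ leaving a component $T_i$ goes to $v$, contamination cannot pass between distinct components: if the contaminated set lies inside $T_i$ it stays inside $T_i$. In particular, once a component has been cleared it remains clear.

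The remaining $\lceil\log_2 n\rceil-1$ cats then clear the components one at a time. Restricted to a single component $T_i$, the blocking of $v$ by the guard means that the mouse confined to $T_i$ moves exactly as it would in the standalone tree $T_i$, so by the induction hypothesis applied to $T_i$ these cats can drive the contamination inside $T_i$ to $\emptyset$. Here I would use that $T_i$ has order at most $\lceil(n-1)/2\rceil\le 2^{\lceil\log_2 n\rceil-1}$, whence $\lceil\log_2|T_i|\rceil\le\lceil\log_2 n\rceil-1$, so exactly this many cats are available. Sweeping through $T_1,\dots,T_l$ in turn, and using that cleared components stay clear, leaves the whole of $T-v$ clear; as $v$ is guarded, we reach $A=\emptyset$ and the mouse is caught. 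The base cases of small $n$ are immediate.

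The step I expect to need the most care is the transfer argument: making precise that guarding $v$ reduces the dynamics on each $T_i$ to the standalone game on $T_i$, and that contamination is genuinely monotone, so that a strategy designed for full initial contamination also succeeds from the (possibly partial) contamination present when we begin sweeping a given component. Formalising ``cleared components stay clear'' in the contaminated-set language, and checking the logarithmic bookkeeping $\lceil\log_2 n\rceil=1+\lceil\log_2\lceil(n-1)/2\rceil\rceil$, are the remaining points to verify.
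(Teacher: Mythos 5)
Your proposal is correct and takes essentially the same approach as the paper: fix a centre $v$ via Lemma \ref{centre}, post one cat permanently on $v$ to confine the mouse to a single component of $T-v$, and have the remaining $\lceil\log_2 n\rceil - 1$ cats sweep the components one at a time using the induction hypothesis, with the same logarithmic bookkeeping. Your contaminated-set formalism and explicit monotonicity remark are simply a more careful write-up of the argument the paper gives informally.
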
 

\begin{proof}
The proof is by induction on $n$. It is clear that one cat can catch the mouse on trees of order no more than 9 by Theorem \ref{1catwin}. So let $T$ be a tree of order $n\geq 10$. Suppose that the induction hypothesis holds for all trees of order less than $n$. By Lemma \ref{centre} $T$ contains a centre $v$. Let $T_1$, $T_2$,\ldots $T_l$ be the components of $T-v$. By induction hypothesis for each $i$ there is a strategy $\mathcal{S}_i$ for $r=\lceil\log_2(\lceil (n-1)/2\rceil)\rceil$ cats to win on $T_i$. We form a new strategy $\mathcal{S}$ for $1r+1$ cats to win on $T$ as follows. Let $C_1, C_2,\ldots, C_{r+1}$ be the cats at our disposal to catch the mouse. At every step $C_1$ chooses $v$. This ensure that the mouse can never travel from one component of $T-v$ to another. In the meantime, the remaining cats run the strategy $\mathcal{S}_1$ on $T_1$, then $\mathcal{S}_2$ on $T_2$, etc, until $\mathcal{S}_i$ is run on $T_i$ for each $i$. We claim that at the end the mouse is caught; indeed assuming it start in component $T_j$ then it always remains in this component thanks to $C_1$ being always posted on $v$. Then when the remaining cats run $\mathcal{S}_j$ on $T_j$ they must catch the mouse. Now $1+r = 1 + \lceil\log_2(\lceil (n-1)/2\rceil)\rceil \leq 1 + \lceil \log_2(n/2)\rceil = 1 + \lceil\log_2(n) - 1 \rceil = \lceil \log_2(n) \rceil$, hence the lemma.    
\end{proof}

\subsection{Improved upper bound}

We begin by defining a variant of the game for bipartite graphs. Let $G$ be a bipartite graph with vertex classes $V_1$ and $V_2$. In the $V_1$-variant of the game, the mouse if forced to choose a vertex from $V_1$ in its first turn. Otherwise, the game is the same. Thus the essential difference between this new game and the standard one is that  after every turn of the mouse the cats know which vertex class of $G$ the mouse lies in since $G$ is bipartite. 

\begin{lemma}
\label{equivalence}
Let $G$ be a bipartite graph with vertex classes $V_1$ and $V_2$. Then $l$ cats have a winning strategy on $G$ for the $V_1$-game if and only if they have a winning strategy for the standard game.
\end{lemma}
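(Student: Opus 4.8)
The plan is to prove the two implications separately, the forward implication ($V_1$-win $\Rightarrow$ standard win) being the substantive one. Since the mouse is invisible and the cats receive no feedback, a cat strategy is simply a predetermined sequence $C_0, C_1, C_2, \ldots$ of vertex sets of size at most $l$, and the natural object to track is the \emph{possibility set} $M_t$ of all vertices at which an as-yet-uncaught mouse could be sitting at time $t$. With the convention that on turn $t$ the cats place $C_t$ and then the mouse moves, this set evolves by $M_{t+1} = N(M_t \setminus C_t)$, and the cats win a game precisely when some finite sequence drives $M_t$ to $\emptyset$; the standard game has $M_0 = V(G)$ while the $V_1$-game has $M_0 = V_1$.

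The easy direction is immediate: if $l$ cats win the standard game with a sequence $\sigma$, then the same $\sigma$ wins the $V_1$-game, because the possibility set for the $V_1$-game (started from $V_1 \subseteq V(G)$) is contained at every step in the one for the standard game, by monotonicity of the operator $X \mapsto N(X \setminus C)$; hence it too reaches $\emptyset$.

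For the hard direction, the key structural observation is that the operators $X \mapsto X \setminus C$ and $X \mapsto N(X)$ both distribute over unions, so the standard possibility set splits as $M_t = M_t^{(1)} \cup M_t^{(2)}$, where $M^{(1)}$ is the evolution started from $V_1$ and $M^{(2)}$ the one started from $V_2$, both under the same cat sequence. By bipartiteness, $M_t^{(1)}$ and $M_t^{(2)}$ lie in opposite vertex classes at every time $t$; in particular each lies entirely within a single class. Now let $\sigma = (C_0, \ldots, C_{T-1})$ be a winning $V_1$-sequence, so that running it drives $M^{(1)}$ to $\emptyset$ by time $T$. Running $\sigma$ as the first phase of the standard game therefore empties $M^{(1)}$, leaving $M_T = M_T^{(2)}$, a subset of a single class. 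If this class is $V_1$ we simply replay $\sigma$ from time $T$; by monotonicity $\sigma$ empties any start-set contained in $V_1$, so this finishes the game. If instead $M_T^{(2)} \subseteq V_2$, we first insert one arbitrary (say empty) placement, after which the possibility set moves to $N(M_T^{(2)}) \subseteq V_1$, and then replaying $\sigma$ empties it. In all cases a finite concatenation of at most two copies of $\sigma$ wins the standard game.

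The step I expect to require the most care is the bookkeeping around the mouse that starts in $V_2$: one must verify that the first phase genuinely leaves $M^{(1)}$ empty while interacting with $M^{(2)}$ only through the union decomposition, and that the single parity-fixing filler move correctly aligns the replayed copy of $\sigma$ so that it applies to a start-set inside $V_1$. I would also record at the outset the small observation that a winning strategy against an invisible mouse may be taken to be a finite open-loop sequence, since this is precisely what makes the phase-by-phase concatenation well defined.
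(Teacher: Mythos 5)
Your proof is correct and takes essentially the same approach as the paper: run the winning $V_1$-sequence once to eliminate mice starting in $V_1$, then exploit bipartiteness to insert at most one filler move so that the remaining possible mouse positions lie in $V_1$, and run the sequence again. Your possibility-set formalism ($M_{t+1} = N(M_t \setminus C_t)$, the union decomposition, and monotonicity) is just a more explicit bookkeeping of the paper's case analysis on the mouse's starting class and the parity of the strategy length.
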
 

\begin{proof}
One direction is obvious: if the cats have a winning strategy for the standard game, the same strategy works for the variant game. So assume that the cats have a strategy $\mathcal{S}$ for the variant game on $G$. Let $t$ be the number of rounds of $\mathcal{S}$. The strategy for the standard game is as follows: first run $\mathcal{S}$. If $t$ is odd, now run $\mathcal{S}$ again. If $t$ is even then wait for one turn (i.e. on turn $t+1$ each cat chooses any vertex it likes) and now run $\mathcal{S}$ again. We claim that this strategy works for the cats in the standard game. Indeed, if the mouse starts on $V_1$ then they certainly catch it during the first run of $\mathcal{S}$. So we may assume that the mouse starts on $V_2$. But then as the graph is bipartite, immediately before every odd turn of the cats the mouse lies in $V_2$ and immediately before every even turn it lies in $V_1$. So if $t$ is odd then immediately before the the ($t+1$)th turn of the cats it lies on $V_1$ and we catch it during the second run of $\mathcal{S}$, whereas if $t$ is even then the waiting move ensure that immediately before we run $\mathcal{S}$ again the mouse lies on $V_1$ and therefore we subsequently catch it.  
\end{proof}

Before stating the main theorem of this section, let us take a closer look
at the proof of Lemma \ref{basicupperbound}. Given a tree $T$, we chose a
vertex $v$ of the tree so that $T-v$ consisted of two disjoint parts $T_1$
and $T_2$ not joined to each other by any edge, for which we could by
induction find winning strategies $\mathcal{S}_{T_1}, \mathcal{S}_{T_2}$ with
one less cat; therefore it sufficed to post one cat at $v$ while the other cats were running the winning strategies on $T_1$ and then $T_2$. But since the we obtain the winning strategies by induction, $\s_1$ itself consists in splitting $T_1$ into two smaller parts, call them $T_3$ and $T_4$ and running a winning strategy on each while guarding the vertex guarding them which we shall call $u$. So while two cats are guarding $u$ and $v$, the rest of the cats are running winning strategies on $T_3$ and then $T_4$. An important observation is as follows. If $u$ and $v$ do not belong to the same vertex class of $T$ then it is possible, in this variant of the game, to guard them both using \emph{one cat only}. Indeed, in this variant of the game the cats know at each step which class the mouse lies on, say the mouse starts in the class of $u$. Before each odd turn of the cats the mouse lies in the same class as $u$ and before each even turn on the same class as $v$. Therefore a cat choosing $u$ on each odd turn and $v$ on each even turn makes sure that the mouse never visits $u$ or $v$ while it is implementing this guarding strategy. This is the basis for our proof that in fact that essentially $(1/2)\log_2(n)$ cats are enough to win on any tree. Of course, there is the difficulty of having to guard two vertices in the same class at the same time - but it turns out that this problem can be circumvented. 

\begin{theorem}
%\label{lowerbound}
Let $T$ be a tree of order $n$. Then $\lceil(1/2)\log(n)\rceil$ cats have a winning strategy on $T$. 
\end{theorem}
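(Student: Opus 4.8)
The plan is to strengthen the recursive strategy of Lemma~\ref{basicupperbound} by exploiting the bipartite variant of Lemma~\ref{equivalence} so that a single cat can simultaneously guard two vertices lying in \emph{different} vertex classes of $T$. The key quantitative idea is to track not only the order of the subtree a block of cats is responsible for, but also how many of the ``guarding'' vertices fall into each class. Concretely, I would prove by induction a statement of the form: if $T$ is a bipartite tree in which the mouse is constrained to start in a prescribed class, and if we have already committed to guarding some set of external vertices, then $\lceil(1/2)\log(n)\rceil$ (or a slightly refined quantity) cats suffice. The reason one hopes to roughly halve the cat count from Lemma~\ref{basicupperbound} is that each level of the recursion introduces one guarded vertex, and the discussion preceding the theorem shows that two guarded vertices in opposite classes cost only one cat rather than two.

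The execution would proceed as follows. First I would set up the induction so that at each step I apply Lemma~\ref{centre} to extract a centre $v$ splitting $T$ into components each of order at most $\lceil(n-1)/2\rceil$, and I recurse into the component $T_j$ containing the mouse (the single guard on $v$ confines the mouse to that component, exactly as before). Unrolling two levels of recursion produces two guarded vertices $u$ and $v$; by the parity argument a single cat alternating between $u$ on odd turns and $v$ on even turns guards both, \emph{provided} $u$ and $v$ lie in opposite classes. I would therefore maintain, as an invariant of the induction, a pairing of the currently guarded vertices into cross-class pairs, each serviced by one cat, together with at most one unpaired ``leftover'' guard. When a new centre is added, I would match it against the current leftover if their classes differ, thereby consuming two guards with one cat and emptying the leftover slot; otherwise the new centre becomes the leftover. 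Summing the guard cost over the $\approx\log(n)$ levels of the recursion, the pairing saves a factor of two, yielding the $\tfrac12\log(n)$ bound after invoking Lemma~\ref{equivalence} to pass back from the class-constrained variant to the standard game.

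The main obstacle, flagged explicitly in the paragraph before the theorem, is the case where the newly created centre lies in the \emph{same} class as the leftover guard we are trying to pair it with: then one cat cannot cover both, and naively we would accumulate same-class guards and lose the factor of two. I expect the crux of the argument to be a scheme for rebalancing these parities, for instance by choosing \emph{which} component to recurse into, or by temporarily using a waiting move to shift the mouse's parity, or by showing that whenever a same-class collision occurs one can instead re-split the relevant subtree so that the effective guard lands in the opposite class. The guarantee from Lemma~\ref{centre} that the subtree order drops by roughly a half each time is what controls the total number of levels, and hence the total number of distinct guards; the pairing argument is what converts that count into the claimed halving. Establishing that the cross-class pairing can always be arranged---so that at every level at most one guard is ever unpaired---is the delicate point, and I would devote the bulk of the proof to carefully defining the recursion (likely as an auxiliary lemma about guarding a specified list of vertices) so that this invariant is preserved throughout.
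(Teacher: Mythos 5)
Your starting point is exactly the paper's: Lemma~\ref{equivalence} lets one cat guard two vertices lying in opposite classes by alternating between them, and this is where the factor of two must come from. But the plan as written has a genuine gap, and it sits precisely at the point you yourself flag as delicate: the case where the two vertices you want one cat to cover lie in the \emph{same} class. None of your three proposed escapes is shown to work, and the first two cannot work. A waiting move is useless here: if $u$ and $v$ lie in the same class, the mouse can occupy $u$ or $v$ only at time steps of one fixed parity, and at each such step a single cat stands on only one of the two vertices; shifting the schedule in time does not change this. Choosing which component to recurse into does not help either, since the obstruction is the class of the centre of the component that must eventually be cleared, not the order in which components are cleared. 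The only workable instinct is your third one (re-splitting so that the effective guard lands in the opposite class), and making it precise is essentially the entire content of the paper's proof: in its hardest case, when a centre $b_i$ of a large component lies in the same class as the global centre $v$, the paper guards instead a neighbour $b_i'$ of $b_i$ (automatically in the opposite class), redefines the decomposition around $b_i'$, and then runs a carefully ordered eight-stage sweep with explicit two-turn hand-off stages, so that regions already cleared can never be re-entered while the guard switches from one alternation pattern to another. Without this construction (or an equivalent one), the claimed bound does not follow from your outline.

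There is also a structural difference worth noting, because it is what keeps the paper's induction manageable. You propose to unroll the recursion across all $\approx \log n$ levels and maintain a global pairing invariant on the accumulated guards, which forces the induction hypothesis to be about ``a tree together with a list of guarded vertices and class constraints'' --- and then the same-class problem must be solved at every level simultaneously. The paper instead unrolls only two levels at a time: it takes the centre $v$ of $T$, then the centres $b_i$ of the (at most three) components of $T-v$ of order exceeding $n/4$; one guard cat covers the pair $(v,b_i)$ (or $(v,b_i')$ in the bad case), the soldiers clear pieces of order at most $n/4$, and the induction hypothesis is invoked as a black box on those pieces. Since $1+\lceil(1/2)\log(n/4)\rceil=\lceil(1/2)\log(n)\rceil$, one extra cat per factor-of-four reduction gives the bound, and no invariant about accumulated guards is ever needed. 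Separately, be careful with phrases like ``recurse into the component containing the mouse'': the mouse is invisible, so a strategy cannot branch on its location; the cats must sweep all components in some fixed order, and only the \emph{analysis} may condition on where the mouse actually is.
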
  
 
\begin{proof}
The proof is by induction on $n$. It is clear by Theorem \ref{1catwin} that one cat can catch the mouse on trees of order $n\leq 9$. So let $T$ be a tree of order $n\geq 10$.  By Lemma \ref{centre} $T$ has a centre $v$. Let us denote by $V_1$ and $V_2$ its two vertex classes; we may assume without loss of generality that $v\in V_1$. Let $\mathcal{B}=\{B_1,B_2,\ldots, B_j \}$ be the set of components of order more than $n/4$ in $T-v$. Let $U$ be the union of the comonents of order no more than $n/4$ in $T-v$. Clearly there are at most $3$ components of size more than $n/4$ in $T-v$, i.e. $j\leq 3$. Let $r = \lceil (1/2)\log(n/4)\rceil$, so that $1+r \leq \lceil (1/2)\log(n)\rceil$. Let us assume for the moment that $j=3$ as this case is the most difficult to handle. 

For $i\in \{1,2,3 \}$ let $b_i$ be a centre for $B_i$; let $B_i^+$ be the component of $B_i - b_i$ which contains a vertex joined by an edge to $v$ and let $B_i^-$ be the union of the other components of $B_i - v$. We consider four different cases.    

\begin{description}
\item[Case 1]
All three of $b_1$, $b_2$ and $b_3$ belong to $V_2$. Since $B_i^-$ is a forest of trees each of order no more than $n/4$ we know by induction hypothesis that there exists a winning strategy $\s_{B_i^-}$ for $r$ cats on $B_i^-$. Likewise let $s_{B_i^+}$ be a winning strategy for $r$ cats on $B_i^+$ and let $\s_U$ be a winning strategy for $r$ cats on $U$. Clearly we may assume without loss of generality that all the strategies which we have just chosen last for an even number of turns. We now define a strategy $\s$ for the $|V_1|$-variant of the game for $r+1$ cats on $T$ as follows. One of the $r+1$ cats is called "the guard" and has a special role in the strategy.  The remaining cats will be referred to as "the soldiers". 
\begin{description}
\item[Stage 1]
The guard alternates between $v$ and $b_1$, starting with $v$. In the meantime the soldiers implement $\s_{B_1^+}$ on $B_1^+$ and then $\s_{B_1^-}$ on $B_1^-$.  
\item[Stage 2]
The guard alternates between $v$ and $b_2$, starting with $v$. In the meantime the soldiers implement $\s_{B_2^+}$ on $B_2^+$ and then $\s_{B_2^-}$ on $B_2^-$.   
\item[Stage 3]
The guard alternates between $v$ and $b_3$, starting with $v$. In the meantime the soldiers implement $\s_{B_3^+}$ on $B_3^+$ and then $\s_{B_3^-}$ on $B_3^-$.  
\item[Stage 4]
The guard remains on $v$ while the soldiers implement $\s_U$ on $U$.   
\end{description}
Let us now check that $\s$ is a winning strategy for the cats. Notice that since each stage lasts for an even number of turns, the mouse is caught if it ever visits $v$ (the guard is always on $v$ whenever the mouse lies on $V_1$). Therefore, it must start in $B_i$ for some $i$ or in $U$ and remain in this set for the duration of the game. Suppose it starts in $B_i$ for $i\in \{1,2,3\}$. Then at the beginning of stage $i$ it lies on a vertex in $V_1$ (hence not $b_i$) and is trapped in either one of $B_i^+$ or $B_i^-$ for the duration of this stage. But then it is clearly caught by the soldiers at some point during this stage. If the mouse started the game in $U$ then clearly it will be caught during stage 4. 
\item[Cases 2 and 3]
One or two of the $b_i$'s belong to $V_1$ and one belongs to $V_2$. This case will be left to the reader, as it is simpler than Case 4 below.  
\item[Case 4]
Each one of $b_1$, $b_2$ and $b_3$ lies in $V_1$. In this case we let $b'_1$ be that vertex of $B_1^+$ joined to $b_1$ by an edge and let $B_1^*$ be $B_1^+ - b'_1$. $B_2^*$ and $b'_2$ are likewise defined. However, for the definition of $b'_3$ and $B_3^*$ is slightly different: $b'_3$ is the element of $B_3$ joined by an edge to $v$ ) and $B_3^*$ denotes $B_3 - b'_3$. As in Case 1, we let $\s_X$ denote a winning strategy for $r$ cats on the forest $X$ consisting of trees each of order no more than $n/4$. Furthermore, all such strategies that we choose shall last for an even number of turns. We define a strategy $\s$ for $r+1$ cats as follows. We shall still have one guard and $r$ soldiers. 
\begin{description}
\item[Stage 1]
The guard remains on $b_1$ throughout this stage. The soldiers implement $\s_{B_1^-}$ on $B_1^-$. 
\item[Stage 2]
This stage lasts two turns. The guard chooses $b_1'$ for one turn while the soldiers wait. On the second turn the guard chooses $b_1'$ while the soldiers wait. 
\item[Stage 3]
The guard alternates between $v$ and $b_1'$, starting with $v$. In the meantime the soldiers implement $\s_{B_1^*}$ on $B_1^*$. 
\item[Stage 4]
The guard alternates between $v$ and $b'_2$, starting with $v$. In the meantime, the soldiers implement $\s_{B_2^*}$ on $B_2^*$. 
\item[Stage 5]
The guard alternates between $v$ and $b'_2$, starting with $v$. The soldiers implement $\s_U$ on $U$. 
\item[Stage 6]
This stage lasts two turns. On the first turn, the guard chooses $v$. On the second turn the guard chooses $b'_2$, one of the soldiers chooses $b'_3$ while the remaining soldiers wait (we may clearly assume there is at least one soldier). 
\item[Stage 7]
The guard alternates between $b'_3$ and $b_2$. In the meantime, the soldiers implement $\s_{B_2^-}$ on $B_2^-$. 
\item[Stage 8]
The guard alternates between $b'_3$ and $b_3$. In the meantime, the soldiers implement $s_{B_3^*}$ on $B_3^*$ and then $B_3^-$ on $B_3^-$.     
\end{description}
We shall now check that $\s$ is a winning strategy for the cats. Notice that since each stage of strategy implemented by the cats lasts for an even number of turns, we know that at the beginin of each such stage (that is, immediately before the first turn of the cats), the mouse lies in $V_1$. If the mouse starts the game in $B_1^-$ then it is clearly caught during Stage 1, since the guard ensures that it cannot escape this set. Thus at the beginning of stage $2$, we may assume that the mouse lies in $(T- B_1^-)\cap V_1$. Then it is clear that at the beginning of stage 3 that the mouse must lie in $(T - B_1^- - b_1)\cap V_1 $ if it wasn't already caught. During Stage 3 it may never travel to either $v$ or $b'_1$ thanks to the guard. Thus, if at the beginning of this stage it lied in $B_1^* \cup \{b'_1\}$, it was caught by the cats. Otherwise, it began this stage in $T - B_1 - v$ and remained there throughout this stage. Thus, we may assume that at the beginning of stage 4 the mouse lied in $T - B_1 - v$. Throughout this stage, thanks to the guard, the mouse never visits $v$ and hence in particular never enters $B_1$. Moreover, it is clear that if it started this stage in $B_2^*$ then it was caught by the cats. Thus at the beginning of Stage 5 the mouse lied in $(T - B_1 - B_2^*)\cap V_1$. During stage $5$ both $v$ and $b'_2$ are guarded by the guard, so the mouse never entered $B_1$ or $B_2^*$. Moreover, it is clear that if the mouse started this stage in $U$ then it was caught by the soldiers. Therefore it started in $B_2^- \cup \{b_2\} \cup B_3$ and never escaped this set during stage 5; therefore we may assume that at the beginning of stage 6 it lied in $(B_2^*\cup B_3\cup\{v\})\cap V_1$. Clearly after the end of Stage 6, it lied in $(B_2^*\cup B_3)\cap V_1$. As the guard now alternates between $b_3'$ and $b_2$, the mouse is caught during stage 7 if it lied in $B_2^-$ at the beginning of this stage. Thus we may assume that it started in $B_3$ and never escaped this set since the guard was guarding $b'_3$. Now in stage $8$ it is clear that the cats will catch the mouse.         
\end{description} 

The cases where $j \le 2$ can be treated in a similar manner.
\end{proof}

\section{Lower bound}

In this section we establish the lower bound for the number of cats
on a tree of given order. More precisely, for each $k \ge 1$ we construct
a tree $T_k$ on $2^{k+2} - 3$ vertices for which $(1/4 - o(1)) k$ cats
are necessary to catch the mouse. This implies that
$g(n) \ge (1/4 - o(1)) \log_2 n$.

We now describe the construction of the trees in question. For any
integer $k \ge 1$, let $B_k$ be the complete binary tree of height $k$.
Then define $T_k$ to be the $1$-subdivision of $B_k$, that is the tree
obtained from $B_k$ by replacing every edge with a path of length two.
Note that $B_k$ has $2^{k+1}-1$ vertices and one fewer edges, and so
$|T_k| = 2^{k+2} - 3$.

We shall first prove that the number of cats required to catch the mouse
on $T_k$ is $\Omega(k)$. Afterwards we shall refine the argument to
prove that in fact this number is at least $(1/4 - o(1)) k$, implying
Theorem~\ref{lowerbound}.

\subsection{A linear bound}

Here we prove the weaker lower bound $g(n) = \Omega(\log n)$.

We start with some preliminary lemmas. The first one is regarding
arithmetic properties of natural numbers. Given a natural number $n$,
we define $\gamma(n)$ as the number of positions in the binary expansion
of $n$ where a one is followed by a zero. In other words, if
$n = \sum_{i=0}^r a_i 2^i$ for some $a_i \in \{0, 1\}$,
then $\gamma(n) = |\{i \in \{1, \dotsc, r\} : a_i = 1,\,a_{i-1} =0\}|$.

Moreover, we define $\beta(n)$ as the least integer $r \ge 1$ such
that $n$ can be expressed as the sum $n = \sum_{i=1}^r s_i 2^{a_i}$
with $s_i \in \{1, -1\}$ and $a_i \in \N_0$.
In other words, $\beta(n)$ is the least possible number of terms
in an expression of $n$ as the sum and difference of powers of two.
For consistency we shall define $\beta(0) = \gamma(0) = 0$.

These two functions satisfy a relation.

\begin{lem} \label{lem:arithmetic}
    For any positive integer $n$, $\beta(n) \ge \gamma(n)$.
\end{lem}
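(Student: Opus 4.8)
The plan is to treat $\gamma$ as a monovariant that changes only slowly as we assemble $n$ from signed powers of two. It is convenient first to read the definition of $\gamma(n)$ geometrically: scanning the binary expansion from the top bit downwards, a ``$10$'' pattern (a position $i \ge 1$ with $a_i = 1$ and $a_{i-1} = 0$) sits exactly at the low end of each maximal block of consecutive ones, the only exception being the block containing bit $0$ when $n$ is odd. In particular $\gamma$ is always nonnegative, and $\gamma(0) = 0$ by convention.

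The heart of the argument is the claim that for every nonnegative integer $m$ and every $a \in \N_0$ one has $|\gamma(m + 2^a) - \gamma(m)| \le 1$. To prove it I would compare the binary digits of $m$ and $m + 2^a$, which differ only in a short window, and count the ``$10$'' patterns created or destroyed there. If bit $a$ of $m$ is $0$, then adding $2^a$ simply flips that bit to $1$, and only the patterns straddling positions $a{+}1,a$ and $a,a{-}1$ can change; a direct check gives the net change $[\text{bit } a{-}1 \text{ of } m = 0] - [\text{bit } a{+}1 \text{ of } m = 1] \in \{-1,0,1\}$. If bit $a$ of $m$ is $1$, then a carry clears a maximal run of ones occupying positions $a, a{+}1, \dotsc, a{+}t{-}1$ and sets bit $a{+}t$; again only the two ends of this run matter, and the net change works out to $1 - [\text{bit } a{+}t{+}1 = 1] - [\text{bit } a{-}1 = 0] \in \{-1,0,1\}$ (with the convention that references to the nonexistent position $-1$ are dropped when $a = 0$). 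This case analysis, and in particular the bookkeeping for the carry chain, is the step I expect to be the main obstacle; the rest is formal.

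With the claim in hand I would conclude by telescoping. Fix a minimal expression $n = \sum_{i=1}^{r} s_i 2^{a_i}$ with $r = \beta(n)$, and order its terms so that all terms with $s_i = +1$ come first, followed by all terms with $s_i = -1$. Writing $m_k$ for the $k$-th partial sum, the positive phase makes the values increase from $0$ up to $P := \sum_{s_i = +1} 2^{a_i}$, and the negative phase makes them decrease from $P$ down to $n$; since every intermediate value of the negative phase is at least $n \ge 0$, the entire chain $0 = m_0, m_1, \dotsc, m_r = n$ consists of nonnegative integers, so each $\gamma(m_k)$ is defined. Every step alters $m$ by a single signed power of two, so the claim gives $\gamma(m_k) - \gamma(m_{k-1}) \le 1$ in both phases (for a subtraction $m_k = m_{k-1} - 2^a$ one applies the claim to $m_{k-1} = m_k + 2^a$). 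Telescoping then yields $\gamma(n) = \gamma(m_r) \le \gamma(m_0) + r = r = \beta(n)$, which is exactly the desired inequality. Note that minimality is used only to guarantee exactly $\beta(n)$ terms; no further normalisation of the representation (e.g.\ distinctness of exponents) is required.
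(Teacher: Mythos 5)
Your proof is correct, and it rests on the same engine as the paper's --- build $n$ one signed power of two at a time and show each step changes $\gamma$ by at most $1$ by inspecting a window of binary digits --- but the implementation is genuinely different. Your key claim, $|\gamma(m+2^a)-\gamma(m)|\le 1$ for all $m$ and $a$, is two-sided and fully general, and proving it forces you to track an arbitrary carry chain (your second case); your telescoping then needs the (correctly justified) observation that putting the positive terms first keeps every partial sum nonnegative. The paper instead avoids carries entirely: it inducts only on the number $s$ of negative summands, disposing of the case $s=0$ in one stroke ($n$ then has at most $r$ ones, so $\gamma(n)\le r$), and in the inductive step it peels off the negative term with the \emph{smallest} exponent $b$. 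After normalising the exponents to be distinct, bit $b$ of $n'=n+2^b$ is $0$, and the digits of $n'$ from $b$ up to the next $1$ above are all $0$, so subtracting $2^b$ produces the single clean pattern $\texttt{10\dots0}\to\texttt{01\dots1}$, and $\gamma(n)\le\gamma(n')+1$ is read off at the two ends of that window. In exchange for the more delicate carry analysis, your route needs no normalisation of the representation (repeated or cancelling exponents are harmless) and yields a stronger, reusable local stability statement for $\gamma$; the paper's choice of peeling order buys a shorter digit argument at the cost of the distinctness assumption and an asymmetric induction.
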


\begin{proof}
    Suppose that $n$ can be expressed as a sum
    $\sum_{i=1}^r 2^{a_i} - \sum_{i=1}^s 2^{b_i}$, where $r, s, a_i$
    and $b_i$ are non-negative integers. Our aim is to prove that
    $\gamma(n) \le r + s$. We shall achieve this by induction on $s$.

    If $s = 0$ then $n$ has at most $r$ ones in its binary expansion,
    so clearly $\gamma(n) \le r$. Now suppose that $s \ge 1$. We may
    assume that the $a_i$'s and $b_i$'s are all distinct and
    write $b = \min \{ b_i : 1 \le i \le s \}$. Note that
    $n = n' - 2^b$ where $\gamma(n') \le r + s - 1$ by the induction
    hypothesis.

    For any positive integer $m$, let $(m)_i$ denote the $i$-th lowest
    digit in the binary expansion of $m$.
    By the choice of $b$, $(n')_b = 0$: a one in this position could only
    come from a positive summand $2^{a_i}$, but no $a_i$ equals $b$ by
    assumption. Since $n$ is positive, $(n')_a = 1$ for some $a \ge b$.
    Take the least such $a$, so that $(n')_i = 0$ for every
    $b \le i \le a - 1$. Now $n$ differs from $n'$ in exactly
    the digits $b$ through $a$: $(n)_a = 0$ and $(n)_i = 1$ for every
    $b \le i \le a - 1$.
    \begin{align*}
        n' \,=\, &\texttt{\dots1000000\dots} \\
        n  \,=\, &\texttt{\dots0111111\dots}
    \end{align*}

    Observe that $\gamma(n) - \gamma(n')$ is maximised if the
    $b$-th digit is succeeded by a zero and the $a$-th digit is
    preceeded by a one. In such case this difference is equal
    to one. Therefore in general $\gamma(n) \le \gamma(n') + 1
    \le r + s$, proving the claim.
\end{proof}

Recall that $T_k$ is the $1$-subdivision of the complete binary tree on
$k$ levels $B_k$. We shall distinguish the vertices originally present
in $B_k$ by calling them \emph{important}. For any $X \subset V(T_k)$,
we define $\partial X$ as the set of important vertices not present
in $X$, but which are distance two away from an important vertex in $X$.
In other words, $\partial X$ is the vertex boundary of $X \cap V(B_k)$
in $B_k$.

\begin{lem} \label{lem:weakboundary}
    Let $X \subset V(T_k)$ and let $n$ be the number of important
    vertices contained in $X$. Then $|\partial X| \ge
    (\gamma(n) - 2)/6$.
\end{lem}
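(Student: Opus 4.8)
The plan is to deduce the bound from Lemma~\ref{lem:arithmetic} by exhibiting an economical signed binary expansion of $n$. Write $S = X \cap V(B_k)$ and $t = |\partial X|$, so that $S$ is a set of $n$ important vertices and $\partial X$ is its external vertex boundary in $B_k$. It suffices to prove that $\beta(n) \le 6t + 2$: by Lemma~\ref{lem:arithmetic} this gives $\gamma(n) \le \beta(n) \le 6t + 2$, which rearranges to $|\partial X| = t \ge (\gamma(n)-2)/6$. Thus the whole argument reduces to expressing $n = |S|$ as a sum and difference of at most $6t+2$ powers of two, exploiting the structure of $B_k$.

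First I would analyse how deleting $\partial X$ disconnects $B_k$. Since $\partial X$ is the vertex boundary of $S$, every edge of $B_k - \partial X$ joins two vertices that are both in $S$ or both outside $S$; hence each connected component of $B_k - \partial X$ lies entirely inside $S$ or entirely outside it, and $S$ is the disjoint union of those components that lie inside. Because $B_k$ is a tree of maximum degree $3$, deleting the $t$ vertices of $\partial X$ produces at most $2t+1$ components, so $S$ is tiled by at most $2t+1$ subtrees $C$.

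Next I would express the size of each such component as a signed sum of complete–subtree sizes. Each component $C$ has a unique topmost vertex $r_C$; let $D_{r_C}$ be the complete binary subtree of $B_k$ rooted at $r_C$, of size $2^{m}-1$ for the appropriate $m$. The claim is that $D_{r_C} \sm C$ is the disjoint union of the complete subtrees rooted at those boundary vertices $b \in \partial X$ whose parent lies in $C$: descending from $r_C$, the first vertex leaving $C$ on any path is necessarily such a $b$ (its parent lies in $S$ and it itself does not), and no two such $b$ are nested. Hence $|C| = (2^{m}-1) - \sum_b (2^{m_b}-1)$. Summing over all components, $n$ becomes a signed sum of terms of the form $2^m - 1 = 2^m - 2^0$. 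The number of positive \emph{root} terms is at most $2t+1$, while each boundary vertex is charged as a \emph{hanging} term to at most one component (the one containing its unique parent), contributing at most $t$ further terms. So there are at most $3t+1$ terms of the form $2^m-1$, i.e. at most $6t+2$ signed powers of two, giving $\beta(n)\le 6t+2$ as required.

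The main obstacle is the structural bookkeeping of the middle step rather than any computation: one must verify that the components of $B_k - \partial X$ genuinely tile $S$, that the decomposition $D_{r_C}\sm C=\bigsqcup_b D_b$ holds with the pieces pairwise disjoint (so that no boundary vertex is counted twice), and that the charging scheme assigns each vertex of $\partial X$ to at most one component. Once these facts are pinned down, the counting and the appeal to Lemma~\ref{lem:arithmetic} are immediate. The degenerate cases $S = \es$ and $S = V(B_k)$ (where $t = 0$ and $\gamma(n) = 0$) should be noted, but they are trivially consistent with the inequality.
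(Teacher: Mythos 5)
Your proof is correct, and although it shares the paper's overall skeleton---reduce the statement to the inequality $\beta(n) \le 6\,|\partial X| + 2$ and then invoke Lemma~\ref{lem:arithmetic}---the way you produce the signed expansion of $n$ is genuinely different. The paper proceeds by induction on $m = |\partial X|$: it takes the \emph{maximal} elements $x_1,\dotsc,x_t$ of $\partial X$, applies the induction hypothesis to the pair of subtrees hanging below each $x_i$, and splits into two cases according to whether the root of $B_k$ lies in $X$, writing $n = 2^{k+1}-1-\sum_i\left(2^{k_i+1}-1-n_i\right)$ in one case and $n=\sum_i n_i$ in the other. You instead give a one-shot global decomposition: $S = X\cap V(B_k)$ is a union of at most $2t+1$ connected components of $B_k - \partial X$, each component $C$ satisfies $|C| = |D_{r_C}| - \sum_b |D_b|$ by inclusion--exclusion over the complete subtrees $D_b$ rooted at boundary vertices hanging off $C$, and since each $b \in \partial X$ is charged to at most one component this yields at most $3t+1$ terms of the form $2^a - 1$, hence $\beta(n)\le 6t+2$. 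The structural claims you flag as needing verification do all hold: a vertex first leaving $C$ on a downward path must lie in $\partial X$ (otherwise the edge to its parent survives in $B_k - \partial X$, putting it in $C$), and two hanging boundary vertices cannot be nested (else the tree-path from the deeper one's parent to $r_C$, which stays in $C$ by connectedness, would pass through the shallower one, forcing a boundary vertex into $C$). Your argument avoids induction and the root/no-root case split entirely and makes the source of the constant $6t+2$ transparent (each of the $\le 3t+1$ complete-subtree terms costs two signed powers of two); the paper's recursion is more compact to write but hides this bookkeeping inside the induction hypothesis. Both routes give exactly the same bound.
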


\begin{proof}
    Let $m = |\partial X|$. In the view of Lemma~%
    \ref{lem:arithmetic}, it is enough to prove that $\beta(n) \le 6m + 2$.
    To achieve this we shall use induction on $m$.
    
    We may assume that $X$ contains only important vertices. Then we shall
    consider $X$ as a vertex subset of $B_k$, and $\partial X$ as its
    vertex boundary.

    If $m = 0$, then either $X = V(B_k)$ or $X = \es$. Then $n = |X|$ is
    either $2^{k+1} - 1$ or $0$, so $\beta(n) \le 2$.  
    Now suppose that $m \ge 1$. We shall split the argument into two
    cases, depending on whether $X$ contains the root of $B_k$.

    \begin{description}
        \item[Case 1:] $X$ contains the root of $B_k$.
            
            Let
            $x_1, \dotsc, x_t$ be the maximal elements of $\partial X$,
            meaning that they are the elements of $\partial X$ without
            an ancestor contained in $\partial X$.

            Let us focus on 
            a particular $x_i$. First, let $k_i$ denote the distance from
            $x_i$ to the nearest leaf. Write $S_1$ and $S_2$ for the subtrees
            rooted at the children of $x_i$ (if $x_i$ is a leaf, then
            set $S_1 = S_2 = \es$). Write $n_i$ and $m_i$ for the number
            of elements of respectively $X$ and $\partial X$ contained
            in $V(S_1) \cup V(S_2)$. By the induction hypothesis,
            $\beta(n_i) \le 6 m_i + 4$.

            Now $n = 2^{k+1} - 1 - \sum_{i=1}^t (2^{k_i+1} - 1 - n_i)$,
            so
            \begin{align*}
                \beta(n) &\,\le\, 2(t+1) + \sum_{i=1}^t \beta(n_i) \\
                         &\,\le\, 6t + 2 + 6 \sum_{i=1}^t m_i \\
                         &\,=\, 6m + 2.
            \end{align*}

        \item[Case 2:] $X$ does not contain the root of $B_k$.

            In this case (using the same notation as before)
            $n = \sum_{i=1}^t n_i$. Therefore $\beta(n) \le 
            \sum_{i=1}^t \beta(n_i) \le 6(m-t) + 2 < 6m + 2$. \qedhere
    \end{description}
\end{proof}

\begin{lem} \label{lem:weakmain}
    Let $n \le 2^{k+1} - 1$. If the number of cats playing on $T_k$ is 
    at most $(\gamma(n) - 2)/18$, then the mouse can indefinitely avoid being
    captured.
\end{lem}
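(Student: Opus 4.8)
The plan is to analyze the game through the mouse's \emph{knowledge set}: since the mouse is invisible, we may track the set $P$ of all vertices of $T_k$ consistent with the play so far, and the mouse survives forever precisely when $P$ never becomes empty. A mouse move replaces $P$ by the neighbourhood $N(P) = \bigcup_{v \in P} N(v)$, while a cat move to a set $C$ with $|C| \le c$ (writing $c$ for the number of cats) replaces $P$ by $P \setminus C$. It therefore suffices to exhibit a non-emptiness invariant that the mouse can preserve against every cat response. Since $T_k$ is bipartite with the important vertices on one side and the subdivision vertices on the other, I would track $P$ only at the even times, when it consists of important vertices, and analyse the game in round-pairs: mouse to a midpoint, cats play $C_1$, mouse to an important vertex, cats play $C_2$. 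Each round-pair thus costs the cats at most $2c$ deletions.

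The invariant I would maintain is that at each even time the possible important positions contain all important vertices of a connected subtree $X$ of $B_k$ (viewed as a set of important vertices) with $\gamma(|X|) \ge \gamma(n)$. Starting from a suitable $X_0$ with $n$ important vertices this holds initially, and Lemma~\ref{lem:weakboundary} then guarantees $|\partial X| \ge (\gamma(n) - 2)/6 \ge 3c$, using $c \le (\gamma(n)-2)/18$. Hence the cats can never occupy the whole boundary of $X$: at least one important vertex of $\partial X$ is always left unguarded. This is the engine of the argument, since an unguarded boundary vertex gives the mouse a route by which the corresponding part of $X$ stays reachable, so the cats cannot seal the mouse out of the region.

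The heart of the proof is the round-pair maintenance step. Within one round-pair the cats make at most $2c$ deletions, so they can render impossible at most $2c$ important vertices among those reachable from $X$. The mouse must then \emph{repair} its region: from the surviving reachable set $N(N(X)) \setminus (C_1 \cup C_2)$ it must extract a new connected subtree $X'$, all of whose important vertices are possible, with $\gamma(|X'|) \ge \gamma(n)$ once more. Because $|\partial X| \ge 3c$, there is always enough unguarded boundary both to absorb the deletions and to reshape the region into one of high $\gamma$; the factor $18 = 6 \cdot 3$ is precisely what is consumed here, the $6$ coming from Lemma~\ref{lem:weakboundary} and the $3$ paying for the two cat moves of a round-pair together with the maneuvering overhead forced by the degree-three structure of $B_k$.

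The main obstacle, and the delicate point of the bookkeeping, is the non-monotonicity of $\gamma$: one cannot simply let the region grow, since the whole tree satisfies $\gamma(2^{k+1}-1) = 0$, nor keep a fixed region, since the cats delete from it each round. One must show that after the at most $2c$ deletions the mouse can always \emph{reselect} a connected region of high $\gamma$ inside the reachable set, and it is here that the binary-tree structure and the precise form of Lemma~\ref{lem:weakboundary} have to be combined carefully to verify that the invariant $\gamma(|X'|) \ge \gamma(n)$ is restored. Once this maintenance step is in place, the invariant persists for all time, $P$ is never empty, and the mouse evades capture indefinitely, as required.
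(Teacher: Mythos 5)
Your setup (reduce to a fixed cat sequence $C_1,C_2,\dots$, track the set of possible mouse positions, analyse only the even times when the mouse sits on important vertices) is exactly the paper's framework. But the heart of the lemma is the maintenance step, and your proposal does not prove it: you assert that ``there is always enough unguarded boundary both to absorb the deletions and to reshape the region into one of high $\gamma$,'' and then yourself acknowledge that verifying $\gamma(|X'|) \ge \gamma(n)$ for a reselected \emph{connected} region is the delicate open point. That verification is the entire content of the lemma, so as it stands the proof has a hole precisely where the work is. Moreover, the invariant you chose makes this hole hard (perhaps impossible) to fill: after the cats' deletions the surviving reachable set can shatter into many components, and there is no evident way to find inside it a connected subtree whose \emph{cardinality} again has large $\gamma$ --- this is exactly the non-monotonicity trap you identify but do not escape. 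The paper's proof escapes it by dropping connectivity altogether. Its invariant is purely a count: $A_{2i}$ contains at least $n$ important vertices, where $n$ is fixed once and for all. At each round one re-selects an \emph{arbitrary} subset $A \subseteq A_{2(i-1)}$ of exactly $n$ important vertices (Lemma~\ref{lem:weakboundary} requires no connectivity), gets $|\partial A| \ge (\gamma(n)-2)/6 \ge 3m$, and checks that
$$
A_{2i} \supseteq (A \cup \partial A) \setminus \bigl( B \cup C_{2i} \bigr),
$$
where $B$ is the vertex boundary of $C_{2i-1}$, so that $|A_{2i} \cap V(B_k)| \ge n + 3m - 3m = n$. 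Since $\gamma$ is always evaluated at the same fixed $n$, no ``repair'' or ``reshaping'' argument is ever needed; the non-monotonicity of $\gamma$ simply never enters.

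A second, quantitative error: your claim that a round-pair ``can render impossible at most $2c$ important vertices'' is wrong. A cat placed on a subdivision vertex at the odd step can cut off \emph{both} of its important endpoints (each important vertex becomes unreachable once all of its adjacent subdivision vertices are blocked, and it then lies in the boundary of the blocked set), so the odd-step cats account for up to $2c$ lost important vertices and the even-step cats for up to $c$ more: the correct worst case is $3c$ per round-pair. This is precisely why the constant is $18 = 6 \cdot 3$ --- the $3$ is ``two endpoints per blocked midpoint plus one direct capture,'' not ``two cat moves plus maneuvering overhead'' as you describe. With your $2c$ accounting the slack you thought you had for reshaping the region does not exist.
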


\begin{proof}
    Suppose the number of cats is $m \le (\gamma(n) - 2)/18$.
    Let $C_1, C_2, \dots$ be a sequence of $m$-sets of $V(T_k)$,
    indicating a strategy for the cats. More precisely, at each step $i$,
    the cats will occupy the vertices in $C_i$.

    Let $A_0, A_1, A_2, \dots$ be the sequence of sets, representing
    the possible mouse positions at each step. In other words,
    $A_0 = V(T_k)$ and $A_{i+1}$ is obtained by taking the vertex boundary
    of $A_i$ in $T_k$ and removing $C_{i+1}$ from it. We shall show that
    for any integer $i \ge 0$, the set $A_{2i}$ contains at least $n$
    important vertices, and in particular is not empty.

    Once again we proceed by induction. If $i = 0$, then $A_0$ contains
    $2^{k+1}-1$ important vertices. Now suppose that $i \ge 2$ and assume
    that $A_{2(i-1)}$ contains at least $n$ important vertices. Let $A$
    be a subset of $A_{2(i-1)}$, consisting of exactly $n$ important
    vertices. Then $|\partial A| \ge (\gamma(n) - 2)/6$ by Lemma~%
    \ref{lem:weakboundary}.

    Let $B$ denote the vertex boundary of $C_{2i-1}$ in $T_k$. Then $A_{2i}$
    contains
    $$
        (A \cup \partial A) \sm \left( B \cup C_{2i}\right),
    $$
    and this set has size at least $|A| + |\partial A| - 3m \ge n$.
\end{proof}

\begin{cor}
    Let $k \ge 100$ and suppose that the number of cats playing on
    $T_k$ does not exceed $k / 40$. Then the mouse can indefinitely avoid
    being captured.
\end{cor}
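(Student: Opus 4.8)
The plan is to reduce the corollary to Lemma~\ref{lem:weakmain} by exhibiting a single integer $n \le 2^{k+1}-1$ whose binary expansion contains many ``one immediately followed by a zero'' patterns, so that $\gamma(n)$ grows linearly in $k$. Concretely, I would take $n$ to be the integer whose binary digits equal $1$ exactly at the odd positions $1, 3, 5, \dots, 2t-1$, where $t = \lfloor (k+1)/2 \rfloor$. Each such odd position carries a $1$ sitting directly above the $0$ in the even position below it, so every one of these $t$ positions contributes to $\gamma(n)$; hence $\gamma(n) = t \ge k/2$. The largest power of two appearing is $2^{2t-1}$, and since $2t - 1 \le k$ we get $n < 2^{k+1}$, so the hypothesis $n \le 2^{k+1}-1$ of Lemma~\ref{lem:weakmain} is satisfied.

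With this choice fixed, the remaining work is a routine arithmetic comparison. Lemma~\ref{lem:weakmain} guarantees that the mouse evades any collection of at most $(\gamma(n)-2)/18$ cats, so it suffices to verify that the assumed bound $k/40$ on the number of cats does not exceed $(\gamma(n)-2)/18$. Using $\gamma(n) \ge k/2$, this reduces to the inequality $k/40 \le (k/2 - 2)/18$, which rearranges to $(k-40)/360 \ge 0$ and therefore holds for every $k \ge 40$; in particular it holds under the hypothesis $k \ge 100$. Applying Lemma~\ref{lem:weakmain} then yields the conclusion.

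I do not expect any substantive obstacle here: the corollary is essentially a quantitative repackaging of Lemma~\ref{lem:weakmain}, and the only delicate point is selecting an $n$ that simultaneously stays below $2^{k+1}$ and forces $\gamma(n)$ to be of order $k$. The alternating-bit integer accomplishes both at once. The constant $40$ in the statement is merely a convenient value for which the final inequality closes with room to spare; the same argument with $\gamma(n) \approx k/2$ shows that any bound below roughly $k/36$ on the number of cats would also work, and the gap between $k/40$ and $k/36$ is exactly the slack purchased by the assumption $k \ge 100$.
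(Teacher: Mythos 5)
Your proof is correct and follows essentially the same route as the paper: both pick the alternating-bit integer $n = 1010\dots10_2$ (so that $\gamma(n) \ge \lfloor k/2 \rfloor$ while $n \le 2^{k+1}-1$) and then invoke Lemma~\ref{lem:weakmain}. The only difference is that you write out the arithmetic comparison $k/40 \le (k/2-2)/18$ explicitly, which the paper leaves implicit.
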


\begin{proof}
    In order to apply Lemma~\ref{lem:weakmain}, it suffices to find
    $n \le 2^{k+1} - 1$ for which $\gamma(n) \ge \lfloor k / 2 \rfloor$.
    Clearly,
    one such number is given by $n = 101010\dots10_2$ with
    $\lfloor k / 2 \rfloor$ repeats of $10_2$.
\end{proof}

\subsection{Improving the lower bound}

In this subsection we refine the previous argument to prove that
$(1/4 + o(1)) k$ cats are necessary to catch the mouse on $T_k$.
In particular, we prove Theorem~\ref{lowerbound}.

\begin{lem} \label{lem:approximate}
    Let $m, n$ and $k$ be positive integers with $|m - n| \le 2^k$.
    Then $|\beta(m) - \beta(n)| \le k$.
\end{lem}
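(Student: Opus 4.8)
The plan is to reduce the statement to a subadditivity property of $\beta$ and then to an essentially trivial binary-digit count.

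First I would establish that $\beta$ is subadditive: for any non-negative integers $a, b$ we have $\beta(a+b) \le \beta(a) + \beta(b)$, since concatenating optimal signed representations of $a$ and of $b$ yields a (not necessarily optimal) signed representation of $a+b$ using $\beta(a) + \beta(b)$ terms. The key consequence I want is the Lipschitz-type bound $|\beta(m) - \beta(n)| \le \beta(|m-n|)$. To obtain it, assume without loss of generality that $m \ge n$ and set $d = m - n$. Then $\beta(m) = \beta(n + d) \le \beta(n) + \beta(d)$ directly from subadditivity. For the reverse direction, I would negate every sign in an optimal representation of $d$, producing a representation of $-d$ with $\beta(d)$ terms; appending this to an optimal representation of $m$ expresses $n = m - d$ with at most $\beta(m) + \beta(d)$ terms, so $\beta(n) \le \beta(m) + \beta(d)$. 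Combining the two inequalities gives $|\beta(m) - \beta(n)| \le \beta(d) = \beta(|m-n|)$.

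The second step is to bound $\beta(d)$ by $k$ whenever $d \le 2^k$. If $d = 0$, then $\beta(d) = 0$ by convention. If $0 < d < 2^k$, then the binary expansion of $d$ occupies only the positions $0, \dotsc, k-1$ and hence has at most $k$ ones, so using only these positive powers of two already witnesses $\beta(d) \le k$. Finally, if $d = 2^k$, then $\beta(d) = 1 \le k$ since $k \ge 1$. In every case $\beta(|m-n|) \le k$, and together with the previous step this yields $|\beta(m) - \beta(n)| \le k$, as required.

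There is essentially no serious obstacle here. The only points requiring care are the sign-flipping manoeuvre needed to get \emph{both} directions of the Lipschitz bound (rather than merely $\beta(m) \le \beta(n) + \beta(d)$), and the boundary value $d = 2^k$, which must be handled separately from the range $d < 2^k$, since $2^k$ has $k+1$ binary digits yet weight only $1$. The conventions $\beta(0) = 0$ dispose of the degenerate case $m = n$.
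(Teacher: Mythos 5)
Your proof is correct and follows the same route as the paper, whose entire proof is the one-line remark that (assuming $m \ge n$) the difference $m-n$ can be written as a sum of at most $k$ powers of two. You have simply made explicit the details the paper leaves implicit: the subadditivity of $\beta$, the sign-flip needed for the reverse inequality, and the boundary case $d = 2^k$.
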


\begin{proof}
    We may assume that $m \ge n$ and then $m - n$ can be expressed
    as a sum of at most $k$ powers of two.
\end{proof}

\begin{lem}
    For any $\epsilon > 0$ there is some $k_0  \ge 1$ with the following
    property. Let $k \ge k_0$, $X \subset V(T_k)$ and let $n$ be the
    number of important vertices contained in $X$. Then $|\partial X|
    \ge \gamma(n) - \epsilon k$.
\end{lem}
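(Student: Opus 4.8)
The plan is to follow the structure of the proof of Lemma~\ref{lem:weakboundary}, but to replace the crude bound $\beta(n) \le 6m + 2$ with a sharper estimate that loses only $o(k)$ rather than a constant factor of $6$. The key realisation is that the constant $6$ arose from two separate inefficiencies in the previous argument: the additive $+2$ contributions coming from writing each complete subtree size $2^{k_i+1}-1$ using two signed powers of two, and the factor of $6$ that propagated multiplicatively through the induction. To eliminate the multiplicative loss I would aim to prove, by induction on $m = |\partial X|$, a bound of the shape $\beta(n) \le \gamma(n) + f(k,m)$ where the slack $f$ grows only like $\epsilon k$ rather than like $m$. Since by Lemma~\ref{lem:arithmetic} we always have $\gamma(n) \le \beta(n)$, it suffices to control $\beta(n)$ from above and then invoke $|\partial X| \ge (\text{something involving }\beta(n))$.

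The central new tool is Lemma~\ref{lem:approximate}: if two integers differ by at most $2^k$ then their $\beta$-values differ by at most $k$. This is precisely what lets us absorb the size of a whole subtree cheaply. Concretely, reconsider Case~1 of Lemma~\ref{lem:weakboundary}. There we wrote
\[
    n \,=\, 2^{k+1} - 1 - \sum_{i=1}^t \left(2^{k_i+1} - 1 - n_i\right),
\]
and the plan is to estimate $\beta(n)$ differently: rather than splitting every $2^{k_i+1}-1$ into signed powers of two and paying a constant per subtree, I would group the maximal boundary vertices $x_i$ by their height $k_i$. For each fixed height $h$, the collection of full subtrees of that height removed from $X$ can be summed, and Lemma~\ref{lem:approximate} shows that replacing the exact contribution of those subtrees by the nearest convenient value costs at most $h$ (or rather $\log$ of the number of such subtrees, which is at most $k$) in the $\beta$-count, independently of how many subtrees share that height. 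Summing over the at most $k$ distinct heights gives a total slack that is bounded by a function growing like a constant times $k$; choosing $k_0$ large and tracking constants carefully should bring the slack below $\epsilon k$.

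I would carry out the steps in the following order. First, record the grouping of the maximal elements of $\partial X$ by height, and observe that since the heights lie in $\{0, 1, \dots, k\}$ there are at most $k+1$ groups. Second, for each height apply Lemma~\ref{lem:approximate} to bound the cost of the corresponding block of subtrees by $O(k)$ total, and recombine via the induction hypothesis applied to the $n_i$ inside each subtree so that the $\gamma(n_i)$ terms add up correctly. Third, handle Case~2 (root not in $X$) analogously, where $n = \sum_i n_i$ directly and no boundary term from the root is needed. Finally, invoke Lemma~\ref{lem:arithmetic} to convert the resulting bound $\beta(n) \le \gamma(n) + \epsilon k$ into the desired inequality $|\partial X| \ge \gamma(n) - \epsilon k$ by choosing $k_0$ so that $m \ge \beta(n) - \epsilon k \ge \gamma(n) - \epsilon k$ for all $k \ge k_0$.

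The main obstacle will be the bookkeeping in the inductive step: unlike the previous lemma, where each subtree contributed an independent constant, here the slack must be shared across subtrees of equal height and accounted for only once per height, so I must ensure the induction hypothesis is stated with enough uniformity (an additive $\epsilon k$-type slack rather than a per-level slack) for the recursion to close. In particular, I expect the delicate point to be verifying that aggregating the approximation error from Lemma~\ref{lem:approximate} over all heights does not accumulate a factor larger than $\epsilon k$; this is what forces the choice of $k_0$ depending on $\epsilon$, and getting the constant genuinely below $\epsilon$ (rather than some fixed multiple) is where the argument must be most careful.
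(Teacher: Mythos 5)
Your plan keeps the top-down, branching recursion of Lemma~\ref{lem:weakboundary} (decomposing at the \emph{maximal} elements of $\partial X$ and recursing into the subtrees below them) and tries to repair its inefficiency by grouping those subtrees by height and invoking Lemma~\ref{lem:approximate} once per height. This leaves a genuine gap: whatever additive slack your induction hypothesis carries is multiplied by the branching factor. If the hypothesis gives $\beta(n_i) \le m_i + f(k)$ for each of the $t$ subtrees at a given level, then summing yields $\sum_i \beta(n_i) \le \sum_i m_i + t\, f(k)$, and $t$ can be as large as $m$; so the slack $t\,f(k)$ is not $o(k)$ for any choice of $f$. Worse, a per-subtree slack is unavoidable in this formulation (a full subtree with $m_i = 0$ already forces $\beta(n_i) = 2$), which is precisely how the weak lemma ends up with a $\Theta(m)$ loss. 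You flag this as ``the delicate point'', but grouping by height does not touch it: the accumulation happens across the nesting depth of the recursion, not across heights within one level. There is a second, independent, quantitative problem: you estimate your total slack as ``a constant times $k$'' and hope to push it below $\epsilon k$ by taking $k_0$ large. That cannot work: a slack of $ck$ with fixed $c > 0$ exceeds $\epsilon k$ for \emph{every} $k$ once $\epsilon < c$; the final step requires a slack that is genuinely $o(k)$, such as $O(\log k)$. (Also, the inequality you state as the inductive target, $\beta(n) \le \gamma(n) + f(k,m)$, does not involve $m$ and so cannot yield the lemma; what is needed, and what your last step implicitly uses, is $\beta(n) \le m + o(k)$.)

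The paper closes both gaps with a differently structured induction. Instead of recursing into subtrees, it peels off a single \emph{minimal} element $x$ of $\partial X$ at a time: the subtree $S$ rooted at $x$ contains no other boundary vertices, so each child subtree of $x$ lies entirely inside or entirely outside $X$, and replacing $X$ by $X \setminus S$ (when the parent of $x$ is outside $X$) or by $X \cup S$ (when it is inside) deletes $x$ from the boundary while changing $|X|$ by $2^a - 1$, $2^{a+1}-2$, $1$, $2^a$ or $2^{a+1}-1$. Crucially, the induction maintains a decomposition $n = n' + d$ in which $n'$ absorbs \emph{one} signed power of two per peeled vertex, so $\beta(n') \le m + 2$, while the leftover $\pm 1, \pm 2$ corrections are accumulated as an \emph{integer} error with $|d| \le 2m$, never converted into a $\beta$-cost along the way. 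Lemma~\ref{lem:approximate} is then applied exactly once, at the very end, to give $\gamma(n) \le \beta(n) \le \beta(n') + \lceil \log_2 |d| \rceil \le m + \log_2 m + 4$; since $\gamma(n) \le k$ one may assume $m \le k$, and the resulting slack $\log_2 k + 4$ really is $o(k)$, hence below $\epsilon k$ for $k \ge k_0$. This non-branching induction, with the error kept as an integer and a single deferred application of Lemma~\ref{lem:approximate}, is exactly the mechanism your proposal lacks.
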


\begin{proof}
    Take any $k \ge 1$ and any $X \subset V(T_k)$. We may assume
    that $X$ contains $n$ important vertices and nothing else. This
    allows us to consider $X$ as a subset of $V(B_k)$ and $\partial X$
    as its vertex boundary in $B_k$. Write $|\partial X| = m$. We
    will prove by induction on $m$ that $n = n' + d$ for some integers
    $n' \ge 0$ and $d$ such that $\beta(n') \le m + 2$ and $|d| \le 2m$.

    If $m = 0$, then $n = 0$ or $n = 2^{k+1} - 1$, and the claim is true.
    Now suppose that $m \ge 1$ and let $x$ be a minimal element of 
    $\partial X$. Let $S$ be the subtree of $B_k$ rooted at $x$;
    because $x$ is minimal, $S$ does not contain other elements of
    $\partial X$. The argument splits into two cases.
    
    \begin{description}
        \item[Case 1:] $x$ is the root of $B_k$, or the parent of $x$
            is not in $X$.

            Let $X' = X \sm S$. Then $\partial X' = (\partial X) \sm \{x\}$,
            so by induction hypothesis $|X'| = t + r$ with $\gamma(t)
            \le m + 1$ and $|r| \le 2m - 2$. Now it suffices to note
            that $|X| - |X'|$ is either $2^a - 1$ or $2^{a+1} - 2$ for
            some $a \ge 1$.

        \item[Case 2:] the parent of $x$ is in $X$.

            This time let $X' = X \cup S$. Again we have $\partial X'
            = (\partial X) \sm \{x\}$, so we can apply induction
            in the same way, but this time noting that $|X'| - |X|$
            is either $1$ or $2^a$, or $2^{a+1} - 1$ for some $a \ge 1$.
    \end{description}

    By Lemmas~\ref{lem:arithmetic}~and~\ref{lem:approximate} we now have
    $\gamma(n) \le \beta(n) \le \beta(n') + \lceil \log_2 |d| \rceil \le m
    + \log_2 m + 4$. Now fix $\epsilon > 0$. Note that $n \le |B_k| =
    2^{k+1}-1$, so $\gamma(n) \le k$. Therefore if $|\partial X| = m \ge k$,
    then we trivially have $|\partial X| \ge \gamma(n) - \epsilon k$. Now
    assume that $m \le k$, so $m \ge \gamma(n) - \log_2 k - 4$. We are done
    by selecting $k_0$ such that $\log_2 k + 4 \le \epsilon k$ for all
    $k \ge k_0$.
\end{proof}

\begin{lem}
    For any $\epsilon > 0$ there is some $k_0 \ge 1$ such that if at most
    $(1/4 - \epsilon) k$ cats are playing on $T_k$ for $k \ge k_0$, then
    the mouse can indefinitely avoid being captured.
\end{lem}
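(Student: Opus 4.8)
The plan is to run the same possibility‑set argument as in Lemma~\ref{lem:weakmain}, but feeding in the sharp boundary estimate established immediately above in place of the crude bound $|\partial X|\ge(\gamma(n)-2)/6$. Concretely, fix a number of cats $m\le(1/4-\epsilon)k$ and let $C_1,C_2,\dots$ be an arbitrary sequence of $m$-sets describing their moves; set $A_0=V(T_k)$ and $A_{i+1}=\partial_{T_k}(A_i)\sm C_{i+1}$, the sets of possible mouse positions. Since $T_k$ is bipartite with the important vertices forming one class, the mouse sits on important vertices at even times, so it suffices to keep some $A_{2i}$ nonempty forever; I will do this by maintaining the invariant that $A_{2i}$ contains at least $n$ important vertices for a suitably chosen $n$.

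I would choose $n$ close to the alternating number $1010\dots10_2$ with about $k/2$ blocks, so that $\gamma(n)\ge(1/2-\epsilon)k$; as subtracting a bounded quantity from $n$ only disturbs its lowest $O(\log k)$ bits, the same lower bound on $\gamma$ holds throughout the whole interval $[n-m,n]$, a robustness I shall need. The preceding lemma then supplies, for every $A$ with this many important vertices, $|\partial A|\ge\gamma(n)-\epsilon k\ge(1/2-2\epsilon)k$. For the induction step I take $A\subseteq A_{2(i-1)}$ consisting of $n$ important vertices, note that over the next two moves the mouse can reach every important vertex in the $B_k$-closed neighbourhood $A\cup\partial A$ except those removed by the cats, and aim to show that the cats remove at most $2m$ of them; then $|A_{2i}\cap\{\text{important}\}|\ge n+|\partial A|-2m\ge n$, closing the induction for $m\le(1/4-\epsilon)k$.

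The crux — and the only place where the constant $1/4$, rather than the $1/6$ one obtains naively, is won — is this bound of $2m$ on the two-step loss. I split the cats by parity: the cats moving at the odd step $2i-1$ are effective only on subdivision vertices, i.e.\ they delete edges of $B_k$, while those at the even step $2i$ delete important vertices directly, costing at most $m$. The remaining task is to show that the at most $m$ deleted subdivision vertices kill at most $m$ elements of $A\cup\partial A$. This is exactly where a pendant-vertex phenomenon bites: a single subdivision cat next to a degree-one vertex can destroy two important vertices at once. I would eliminate it by strengthening the invariant so that the tracked set $A$ consists only of internal vertices of $B_k$ (those of degree at least two), which can always wait in place by stepping into a child and back no matter which edge above them is blocked; the boundary estimate is then applied to the internal complete binary tree $B_{k-1}$. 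For such an $A$ a short Hall/charging argument should assign to each killed vertex a distinct deleted subdivision vertex, yielding the required factor of one.

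The step I expect to be the main obstacle is precisely this charging argument: verifying that, once $A$ avoids degree-one vertices, the subdivision cats delete at most one element of $A\cup\partial A$ each. The delicate case is a vertex of $A$ all of whose incident edges are blocked, which is removed together with some of its neighbours; one must check that such vertices do not overload the matching, for instance by arranging $A$ to contain no $B_{k-1}$-isolated vertex and by absorbing the resulting $O(\log k)$ discrepancies into the slack $\epsilon k$. Once the $2m$ bound is in hand the result follows: choosing $k_0$ large enough that every $O(\log k)$ error term is below $\epsilon k$, the invariant persists for all $i$, the sets $A_{2i}$ are never empty, and hence the mouse evades $m\le(1/4-\epsilon)k$ cats indefinitely on $T_k$.
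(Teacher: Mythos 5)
Your overall framework coincides with the paper's: the same possibility sets $A_i$, the same invariant (``$A_{2i}$ contains at least $n$ important vertices'' with $n$ the alternating binary number), and the same parity split of the cats into edge-blockers (odd steps, on subdivision vertices) and vertex-blockers (even steps, on important vertices). The genuine gap is exactly at the step you yourself flag as the main obstacle, and your proposed repairs do not close it. First, the claimed bound ``the edge-blocking cats kill at most $m$ vertices of $A \cup \partial A$'' is false, and not only because of pendant vertices of $B_k$: whenever an entire connected component of $A \cup \partial A$ is wiped out, the number of killed vertices exceeds the number of blocked edges by one, simply because a tree on $p$ vertices has $p-1$ edges. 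Your strengthenings (keep $A$ inside the internal tree, forbid isolated vertices of $B_k[A]$) only force wiped components to be a bit larger; they do not remove the excess of one per wiped component. With $c = \Theta(k)$ edge-blockers the adversary can wipe $\Theta(k)$ disjoint star-like components, so the total discrepancy is $\Theta(k)$, not $O(\log k)$, and it cannot be absorbed into the slack $\epsilon k$ for small $\epsilon$; carried out as you describe, the argument only yields a constant strictly below $1/4$ (the loss becomes roughly $2c + c/3$, forcing $c \lesssim 3k/14$). Second, you cannot ``arrange'' structural properties of the tracked set: the induction hypothesis guarantees only the cardinality of the surviving set, while the cats decide which vertices survive, so at the next step the $n$ surviving important vertices may well form an independent set in $B_k$ --- precisely the star-component worst case --- and a structural invariant of the kind you want will not propagate.

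The paper's resolution is different and is the one idea your sketch is missing: it does not prevent or bound the excess, it compensates for it. Writing $w_j = 1$ for those components $X_j$ of $X \cup \partial X$ that are wholly wiped (i.e.\ $|E_j| = |V(X_j)| - 1$) and $I = \{ j : w_j = 1 \}$, the two-round loss is at most $2c + |I|$. Now discard the wiped components, setting $X' = X \setminus \bigcup_{j \in I} X_j$. Since each $X_j$ is a full component of $X \cup \partial X$ and contains at least one vertex of $\partial X$, one gets $|\partial X| \ge |\partial X'| + |I|$: each wiped component pays for its own excess with a private boundary vertex. It remains to bound $|\partial X'|$ from below although $X'$ has slightly fewer than $n$ important vertices: each $j \in I$ satisfies $|V(X_j)| \le c+1$, so at most $2c^2 \le k^2$ vertices were discarded, and the sharpened boundary lemma together with Lemma~\ref{lem:approximate} (robustness of $\beta$, hence of the boundary estimate, under perturbations of size polynomial in $k$) gives $|\partial X'| \ge \gamma(n) - O(\log k)$. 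Altogether the gain $|\partial X| \ge (1/2 - 2\epsilon)k + |I|$ beats the loss $2c + |I|$ whenever $c \le (1/4 - \epsilon) k$. This compensation step --- discard, count the private boundary vertices, invoke the $\beta$-robustness --- is what you would need to add; the tight Hall-type matching you hope for simply does not exist in the worst case.
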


\begin{proof}
    Let $\epsilon > 0$ and fix sufficiently large $k_0$ so that any following
    assumptions about its size will hold.
    Suppose that $k \ge k_0$ and let $c$ cats be playing on $T_k$, where
    $c \le (1/4 - \epsilon) k$. Denote by $n$ the number whose binary
    representation consists of $\lfloor k/2 \rfloor$ repeats of $10$, that
    is, $n = 2 \sum_{i=0}^{\lfloor k/2 \rfloor - 1} 4^i =
    2(4^{\lfloor k/2 \rfloor} - 1)$. Observe that $n \le 2^{k+1} - 1$
    and that $\gamma(k) = \lfloor k/2 \rfloor$.

    Let $C_1, C_2, \dots$ be a cat strategy, that is, a sequence of
    $c$-sets from $V(T_k)$. Let $A_0, A_1, \dots$ be the corresponding
    sequence of possible mouse positions, that is, $A_0 = V(T_k)$ and
    $A_{i+1}$ is obtained by taking the vertex boundary of $A_i$ in $T_k$
    and removing $C_{i+1}$ from it. We will prove by induction that
    for each $i \ge 0$, $A_{2i}$ contains at least $n$ important vertices.

    The claim is clearly true for $i = 0$, so assume that $i \ge 1$
    and let $X$ be a subset of $A_{2(i-1)}$ consisting of exactly $n$
    important vertices. Consider $X$ as a subset of $V(B_k)$ and denote
    $\X = X \cup \partial X$. Let $X_1, \dotsc, X_t$ be the connected
    components of $\X$ in $B_k$. Observe that since $X \neq V(B_k)$,
    each $X_j$ contains an element of $\partial X$, and hence $t \le
    |\partial X|$.

    Let us for the moment focus on a single component $X_j$. Let $E_j$
    be the set of edges of $B_k[X_j]$ whose subdividing vertices
    belong to $C_{2i-1}$ (that is, at the ($2i-1$)-th step cats play
    on the vertices of $T_k$ corresponding to these edges of $B_k$).
    Also, let $Y_j = C_{2i} \cap V(X_j)$ and let $R_j$ be the set
    of vertices of $X_j$ whose all incident edges in $B_k[X_j]$
    belong to $E_j$. Note that
    $$
        A_{2i} \supset V(X_j) \sm (Y_j \cup R_j).
    $$
    Our goal now is to estimate $|R_j|$ in terms of $|E_j|$ and 
    $|V(X_j)|$.

    Observe that $R_j$ is the set of isolated vertices in $X_j \sm E_j$.
    In particular, if $X_j \sm E_j$ has exactly $r$ connected components,
    then $|R_j| \le r$. In fact, equality can hold only if $X_j \sm E_j$
    is empty, so if $|E_j| < |V(X_j)| - 1$ then $|R_j| \le r-1 \le |E_j|$.
    We can put this more coincisely as $|R_j| \le |E_j|+ w_j$ where
    $w_j = 1$ if $|E_j| = |V(X_j)| - 1$ and $w_j = 0$ otherwise.

    At this stage we know that the number of important vertices
    contained in $A_{2i}$ is at least
    \begin{align*}
        &\,\sum_{i=1}^t \left( |V(X_j)| - |Y_j| - |R_j| \right ) \\
        \ge&\,|X| + |\partial X| - c - \sum_{j=1}^t \left(|E_j| + w_j\right) \\
        \ge&\,n + |\partial X| - 2c - \sum_{j=1}^t w_j. \\
    \end{align*}

    Let $I$ denote the set of indices $j \in [t]$ such that $w_j = 1$
    and write
    $X' = X \sm \bigcup_{j\in I} X_j$. Observe that $|I| \le c$ 
    (because $\X$ cannot have isolated vertices) and that for any $j \in I$
    we have $|V(X_j)| \le c + 1 \le 2c$. Therefore $|X'| \ge |X| - 2c^2
    \ge |X| - k^2$. By choosing large enough $k_0$ we can guarantee
    \begin{align*}
        |\partial X| &\ge |\partial X'| + |I| \\
                     &\ge \gamma(n) - 2 \lceil \log_2 k \rceil + |I| \\
                     &\ge \left(\frac{1}{2}-2\epsilon\right)k + |I|.
    \end{align*}
    Combining this with the previous inequality, we can conclude that
    $A_{2i}$ contains at least $n$ important vertices.

    We have proved that each of the sets $A_0, A_2, \dotsc$ contains
    at least $n$ important vertices, so they are all not empty.
    Therefore the mouse can indefinitely avoid being captured.
\end{proof}

\end{document}